 \def\RR{{\mathbb R}}  
 \def\ZZ{{\mathbb Z}}
\def\cA{\mathcal{A}}  \def\cG{\mathcal{G}}  \def\cS{\mathcal{S}}
\def\cC{\mathcal{C}}   \def\cO{\mathcal{O}} \def\cU{\mathcal{U}}
\def\cD{\mathcal{D}}    \def\cV{\mathcal{V}}
\def\cE{\mathcal{E}}    
   \def\cR{\mathcal{R}}
\newtheorem*{teo*}{Main Theorem}
\newtheorem*{teo1*}{Main Theorem (first version)}
\newtheorem*{criterion}{Isolated Point Criterion}
\newtheorem*{af}{Claim}
\newtheorem{teo}{Theorem}
\newtheorem{lema}[teo]{Lemma}
\newtheorem{prop}[teo]{Proposition}
\newtheorem{cor}[teo]{Corollary}
\newtheorem{quest}{Question}
\theoremstyle{definition}
\newtheorem{defi}{Definition}
\theoremstyle{remark}
\newtheorem{obs}{Remark}
\newcommand{\eps}{\varepsilon}
\newcommand{\en}{\subset}
\DeclareMathOperator{\Diff}{Diff}
\DeclareMathOperator{\interior}{Int}
\title{Tame dynamics and robust transitivity}
\author[C. Bonatti]{C. Bonatti}
\address{
CNRS - IMB. UMR 5584.
\rm Universit\'e de Bourgogne, 21004 Dijon, France}
\email{bonatti "at" u-bourgogne.fr}
\author[S. Crovisier]{S. Crovisier}
\address{CNRS - LMO. UMR 8628.
\rm Universit\'e Paris-Sud 11, 91405 Orsay, France}
\email{sylvain.crovisier "at" math.u-psud.fr}
\author[N. Gourmelon]{N. Gourmelon}
\address{IMB. UMR 5251. \rm Universit\'e Bordeaux 1, 33405 Talence, France}
\email{Nicolas.Gourmelon "at" math.u-bordeaux1.fr}
\author[R. Potrie]{R. Potrie}
\address{CMAT.
\rm Facultad de Ciencias, Universidad de la Rep\'ublica, Uruguay}
\email{rpotrie "at" cmat.edu.uy}
\date{\today}
\thanks{Partially supported by the ANR project \emph{DynNonHyp} BLAN08-2 313375.}
\begin{document}
\maketitle

\begin{abstract}
One main task of smooth dynamical systems consists in finding a good decomposition into elementary pieces
of the dynamics. This paper contributes to the study of chain-recurrence classes.
It is known that $C^1$-generically, each chain-recurrence class containing a periodic orbit
is equal to the homoclinic class of this orbit. Our result implies that in general this property is fragile. 

We build a $C^1$-open set $\cU$ of tame diffeomorphisms
(their dynamics only splits into finitely many chain-recurrence classes)
such that for any diffeomorphism in a $C^\infty$-dense subset of $\cU$,
one of the chain-recurrence classes is not transitive (and has an isolated point).
Moreover, these dynamics are obtained among partially hyperbolic systems with one-dimensional
center.

%\bigskip

%\noindent%{\bf Keywords:}
%\medskip

%\noindent {\bf MSC 2000:} 37C05, 37C20, 37C25, 37C29, 37D30.
\end{abstract}

\section{Introduction}
In the setting of hyperbolic diffeomorphisms, Smale's spectral decomposition theorem organizes the global dynamics by decomposing it into a finite number of pieces. After the first examples of open sets of non hyperbolic systems \cite{AS,New} people focused on non hyperbolic dynamics trying to recover a decomposition in pieces in this new setting. Many such examples have been obtained, and we can distinguish two main different behaviours:
\begin{itemize}
\item[--] The dynamics can be globally ``undecomposable": there are open sets of \emph{transitive} systems, i.e.
having a half-orbit which is dense in the whole manifold, see~\cite{Sh,Ma,persistence}. In the same spirit, other examples present a decomposition in finitely many disjoint, isolated, robustly transitive compact invariant sets \cite{Ca,BV,persistence}. 
\item[--] On the opposite, the dynamics of generic systems in some $C^r$-open sets ($r\geq 1$) can split into
infinitely many pieces, see~\cite{New,universal}.
Necessarily in this case, some pieces of the decomposition
should be accumulated by other ones.
\end{itemize}
These two patterns (finite or infinite number of indecomposable pieces) are called  \emph{tame} and \emph{wild} systems: in order to formalize these notions we have to explain what we mean by ``pieces''. Several  definitions  have been proposed. 

%An idea for exhibiting examples of non-hyperbolic isolated pieces consists in the construction of
%\emph{robustly transitive sets} as defined in~\cite{BDP}: one considers two open sets $V\subset U\subset M$
%and a $C^1$ open set of systems $f$
%satisfying $f(\overline U)\subset U$,$f(\overline V)\subset V$ and having a dense orbit in the maximal
%invariant set of $U\setminus V$.
%The first examples such that $U\neq M$ were given by~\cite{Ca,BV}, and possess
%a non-hyperbolic robustly transitive attractor.

\begin{itemize}
\item[--] A natural way for a piece to be  dynamically indecomposable is to be transitive. 
Among notable transitive sets are the {\em homoclinic classes} of hyperbolic periodic orbits, that is, the closure of the transversal intersection of ther invariant manifolds. These classes contain dense subsets of hyperbolic periodic orbits that are {\em homoclinically related}: the stable manifold of each of these orbits intersects transversally the unstable manifold of each other.  

\item[--] Other natural candidates for pieces are the \emph{maximal transitive sets}  \cite{universal}. They always exists but may fail to be disjoint and  to capture a large part of the dynamics. 

\item[--]    By weakening the notion of transitivity, Conley  \cite{Co} defined the  very general notion of \emph{chain-recurrence classes}  for homeomorphisms
on a compact manifold $M$. The \emph{chain-recurrent set} $\cR(h)$
is the set of points $x$ contained in closed $\varepsilon$-pseudo-orbits for every $\varepsilon>0$.  This set splits into invariant compact subsets called
\emph{chain-recurrence classes}: two points $x,y\in \cR(f)$ are in a same class if for every $\varepsilon>0$ they are contained in a same closed $\varepsilon$-pseudo-orbit. The chain-recurrence classes cover the whole interesting dynamics, as this notion of recurrence is the weakest possible: the chain-recurrence set contains the limit sets and the non-wandering set.
Moreover, the different chain-recurrence classes
can be separated by filtrations of the dynamics.  
\end{itemize}

Each of these notions has its advantages and drawbacks and in general they do not coincide.
When $f$ is a $C^1$-generic diffeomorphism, one gets better properties.
For instance, \cite{BC} showed that any chain-recurrence class
$\cC$ containing a hyperbolic periodic orbit $O$ coincides with the homoclinic class $H(O)$ of $O$,
and is the unique maximal transitive set containing $O$.
Of course, these properties are not satisfied by all diffeomorphisms, but once they hold $C^1$-generically, one can expect that they persist on larger class of systems, in particular for some close $C^r$-diffeomorphisms. More precisely one can ask:

\begin{quest}
Consider a hyperbolic periodic orbit $O_f$ whose continuation exists on an open set $\cU_0$
of $C^1$-diffeomorphisms. Under what conditions
does there exists a dense open subset $\cU\subset \cU_0$ such that
the chain-recurrence class containing $O_f$ is transitive for any $f\in \cU$?
coincides with the homoclinic class $H(O_f)$?
\end{quest}

This question could be asked for an important particular case.
A chain-recurrence class $\cC$ of a diffeomorphisms $f$ is \emph{robustly isolated} (for the $C^1$-topology) if there exists a neighborhood $\cO$ of $\cC$ and a $C^1$-neighborhood $\cU$ of $f$
such that, for every $g\in\cU$, the set $\cR(g)\cap \cO$ of chain-recurrent points in $\cO$
coincides precisely with a chain-recurrence class $\cC_g$ of $g$ called its \emph{continuation}.

Following~\cite{Ab, BDcycl}, one says that a diffeomorphism $f$ is \emph{tame} if each of its chain-recurrence classes is robustly isolated: in this case, the number of chain-recurrence classes
is finite and constant on a $C^1$-neighborhood of $f$.
Tame diffeomorphisms are the simplest ones and the dynamics of $C^1$-generic
such systems is easier to describe: for instance, all its chain-recurrence classes are homoclinic classes~\cite{BC}.
Until now, the unique known examples of tame dynamics have robustly transitive chain-recurrence classes, i.e.
robustly isolated classes whose continuation remains transitive for any $C^1$-perturbation.
For this reason it was natural to ask:

\begin{quest}[\cite{BC}, problem 1.3]\label{q.1} Is there a dense subset $\cD$ in the space $\Diff^{1}(M)$ of diffeomorphisms
such that every robustly isolated chain-recurrence class of any $f\in \cD$ is robustly transitive? 
\end{quest}
The present paper gives a negative answer to this question.
We denote there by $\Diff^r(M)$ the space of $C^r$-diffeomorphisms of $M$.

\begin{teo1*} Any compact manifold $M$, with $\dim(M)\geq 3$, admits a smooth diffeomorphism $f$ with a robustly isolated chain-recurrence class $\cC_f$ satisfying for any $r>1$ the following property.

The diffeomorphisms $g$  such that the continuation $\cC_g$ is not transitive
form a $C^r$-dense subset of a $C^1$-neighborhood of $f$ in $\Diff^r(M)$.
\end{teo1*}

Actually, the fact for a class to be robustly isolated brings several restriction on the dynamics.
On surfaces, it is well-known that such a class needs to be far from homoclinic tangencies~\cite{New,PS}.
Also it was shown in~\cite{BDP} that it presents dominated splittings and volume hyperbolicity.
Indeed the dominated splitting is the structure which is antagonist
to the homoclinic tangencies~\cite{W,G}.
Conversely, we expect (see~\cite{bonatti-conjecture}, conjecture 11) that, $C^1$-generically,
any chain-recurrent classes exhibiting enough hyperbolicity is robustly isolated:
this should be the case for \emph{partially hyperbolic} classes with a one-dimensional center bundle
(see a precise definition in section~\ref{ss.partial}).
\medskip

The aim of this paper is to build an example of a robustly isolated class as close as possible to being hyperbolic,
in particular $C^1$-far from homoclinic tangencies, which nevertheless is not robustly transitive.
There is the precise statement of our result.

\begin{teo*} When $\dim(M)\geq 3$, there exist a $C^1$-open set $\cU \en \Diff^r(M)$, $1\leq r \leq \infty$,
a $C^r$-dense subset $\cD\subset \cU$ and an open set $U\en M$ with the following properties:
\begin{itemize}
\item[(I)] \emph{Isolation}: For every $f\in \cU$, the set $\cC_f:=U\cap \cR(f)$
is a chain-recurrence class.
\item[(II)] \emph{Non-robust transitivity}: For every $f\in \cD$, the class $\cC_f$
is not transitive.
\item[(III)] \emph{Partial hyperbolicity}: For any $f\in \cU$, the chain-recurrence class $\cC_f$ is partially hyperbolic with a one-dimensional central bundle.

\end{itemize}
More precisely:
\begin{itemize}
\item[(1)] For any $f\in \cU$ there exists a subset $H_f \subset \cC_f$ that
coincides with the homoclinic class of any hyperbolic periodic $x\in \cC_f$.
Moreover, each pair of hyperbolic periodic points in $\cC_f$ with the same stable dimension is homoclinically related.
\item[(2)] For any $f \in \cU$ there exist two hyperbolic periodic points $p,q \in \cC_f$ satisfying $\dim E^s_p=\dim E^s_q+1$ and $\cC_f$ is the disjoint union of $H_f$ with $W^u(p)\cap W^s(q)$. Moreover the points of $W^u(p)\cap W^s(q)$ are isolated in $\cC_f$.

In particular, if $W^u(p)\cap W^s(q)\neq \emptyset$, the class $\cC_f$ is not transitive.
\item[(3)] One has $\cD:= \{ f\in \cU \ : \ W^u(p)\cap W^s(q)\neq \emptyset\}$.
Moreover, this set is a countable union of one-codimensional submanifolds of $\cU$.
\item[(4)] The chain-recurrent set of any $f\in \cU$ is the union of
$\cC_f$ with a finite number of hyperbolic periodic points (which depend continuously on $f$).
\end{itemize}
\end{teo*}

\begin{obs} The isolated points $\cC_f\setminus H_f$ are nonwandering for $f$.
However, they do not belong to $\Omega(f|_{\Omega(f)})$
(since they are isolated in $\Omega(f)$ and non-periodic).
\end{obs}

Let us give now the spirit of our construction. 
We consider two hyperbolic periodic points  $p$ and $q$ with different stable dimension: $\dim(E^s_p)=\dim(E^s_q)+1$. We assume moreover that they robustly belong to the same chain-recurrence class $\cC_f$, which has a partially hyperbolic structure with $1$ dimensional central bundle $E^c$. Such construction can be obtained using blenders, as in~\cite{persistence}.

We can guarantee that a central orientation is preserved so that each point in $\cC_f$
has a right and a left central direction.
We will choose $p$ to be extremal points of the class $\cC_f$ in the following sense (see section~\ref{s.cuspidal}):
we assume that the strong stable manifold $W^{ss}(p)$ of $p$ just meets $\cC_f$ at $p$
and that the intersection $\cC_f\cap W^s(p)$ is contained in a right half submanifold bounded by
$W^{ss}(p)$. In this case the class $\cC_f$ is contained in a cuspidal prism whose edge is the
strong unstable manifold of $p$, see figure~\ref{FiguraCostado}.
One easily verifies that this hypothesis is robust: a fundamental domain of $W^{ss}(p)$
and of the left component of $W^s(p)\setminus W^{ss}(p)$ goes out of a filtrating neighborhood of $\cC_f$.
We call such a point $p$ a \emph{right stable cuspidal point}.
Symmetrically one chooses $q$ to be \emph{left unstable cuspidal}:
$W^{uu}$ and  a right half submanifold bounded by $W^{ss}(p)$ just meet $\cC_f$ at $q$.

As $p$ and $q$ belong to the same class,
small perturbations produce heteroclinic intersections $W^u(p)\cap W^s(q)$.
In particular, any such intersection  belongs to $\cC_f$ but the simultaneous left and right cuspidal geometry implies that it is isolated in $\cC_f$. This formalizes into the following criterion that we use for obtaining
our main theorem.

\begin{criterion}
Let $\cC_f$ be a partially hyperbolic chain-recurrence class. Assume that its central bundle is one-dimensional and endowed with an invariant continuous orientation.
Consider in $\cC_f$ a right stable cuspidal point $p$ and a left unstable cuspidal point $q$,
satisfying $\dim(E^s_p)=\dim(E^s_q)+1$.

Then, any intersection point $x\in W^u(p)\cap W^s(q)$ is isolated in $\cC_f$.
\end{criterion}

Our construction uses strongly the existence of strong stable and strong unstable manifolds
outside of the chain-recurrence class $\cC_f$. Therefore this class cannot be an \emph{attractor},
that is, a transitive set with a neighborhood $U$ satisfying $\cC_f=\cap_{n\geq 0} f^n(U)$;
nor can it be an attractor for $f^{-1}$. The following question remains open.

\begin{quest} Is there a dense $G_\delta$ subset $\cG\subset\Diff^{1}(M)$ such that
every attractor of any $f\in \cG$ is robustly transitive? 
\end{quest}

\section{A mechanism for having isolated points in a chain-recurrence class}\label{sectionMecanisme}

\subsection{Preliminaries on invariant bundles}\label{ss.partial}
Consider $f \in \Diff^1(M)$ preserving a set $\Lambda$.

A $Df$-invariant subbundle $E \en T_\Lambda M$ is {\em uniformly contracted} (resp. {\em uniformly expanded}) if there exists $N>0$ such that for every unit vector $v\in E$,
we have
$$ \|Df^N v \| < \frac 1 2  \qquad (\mbox{resp. } > 2 ).$$
A $Df$-invariant splitting $T_\Lambda M = E^{ss} \oplus E^c \oplus E^{uu}$ is \emph{partially hyperbolic} if $E^{ss}$ is uniformly contracted, $E^{uu}$ is uniformly expanded, both are non trivial, and if there exists $N>0$ such that for any $x\in \Lambda$ and any unit vectors $v_s\in E^{ss}_{x}, v_c \in E^c_{x}$ and $v_u \in E^{uu}_{x}$ we have:
\[  \|Df^N v_s \| < \frac 1 2 \|Df^N v_c \| < \frac 1 4 \|Df^N v_u \|. \]
$E^{ss}$, $E^c$ and $E^{uu}$ are called the {\em strong stable}, {\em center}, and {\em strong unstable} bundles.

\begin{obs}\label{RmqOrientationCentral}
We will sometimes consider a $Df$-invariant continuous orientation of $E^c$.
When $\Lambda$ is the union of two different periodic orbits $O_{p},O_{q}$ and of a heteroclinic orbit
$\{f^n(x)\}\subset W^u(O_{p})\cap W^s(O_{q})$, such an orientation exists if and only if
above each orbit $O_{p}, O_{q}$, the tangent map $Df$ preserves an orientation of the central bundle.

On a one-dimensional bundle, an orientation corresponds to a unit vector field tangent.
\end{obs}

\subsection{Cuspidal periodic points}\label{s.cuspidal}
Let $p$ be a hyperbolic periodic point whose orbit is partially hyperbolic
with a one-dimensional central bundle. When the central space is stable,
there exists a strong stable manifold $W^{ss}(p)$ tangent to $E^{ss}_{p}$ that is invariant by the iterates
$f^\tau$ that fix $p$. It is contained in and separates the stable manifold $W^s(p)$ in two {\em half stable manifolds}
which contain $W^{ss}(p)$ as a boundary.

Let us consider an orientation of $E^c_{p}$. The unit vector defining the orientation goes inward on one half stable manifold of $p$,
that we call the {\em right half stable manifold} $R^s(p)$. The other one is called the {\em left half stable manifold} $L^s(p)$.
These half stable manifolds are invariant by an iterate $f^\tau$
which fixes $p$ if and only if the orientation of $E^c_p$ is preserved by $Df^\tau_p$.
When the central space is unstable, one defines similarly the right and left half unstable manifolds $R^u(p),L^u(p)$.

\begin{defi}
A hyperbolic periodic point $p$ is \emph{right stable cuspidal} if:
\begin{itemize}
\item[--] its orbit is partially hyperbolic, the central bundle is one-dimensional and stable;
\item[--] the left half stable manifold of $p$ intersects the chain-recurrence class of $p$ only at $p$.
\end{itemize}
Symmetrically one defines the \emph{left stable cuspidal points}.
\end{defi}

When the chain-recurrence class $\cC$ containing $p$ is not reduced to the orbit $O_{p}$ of $p$,
this forces the existence of a $Df$-invariant orientation on the central bundle of $O_p$.
In this case, the other half stable manifold intersects $\cC$ at points different from $p$.
The choice of the name has to do with the geometry it imposes on $\cC\cap W^s(p)$ in a neighborhood of $p$, see figure~\ref{FiguraCostado}.
This notion appears in~\cite{fragile}. It is stronger than the notion of \emph{stable boundary points} in \cite{CP}.
We can define in a similar way the \emph{left/right unstable cuspidal points}.

\begin{figure}[ht]\begin{center}
\input{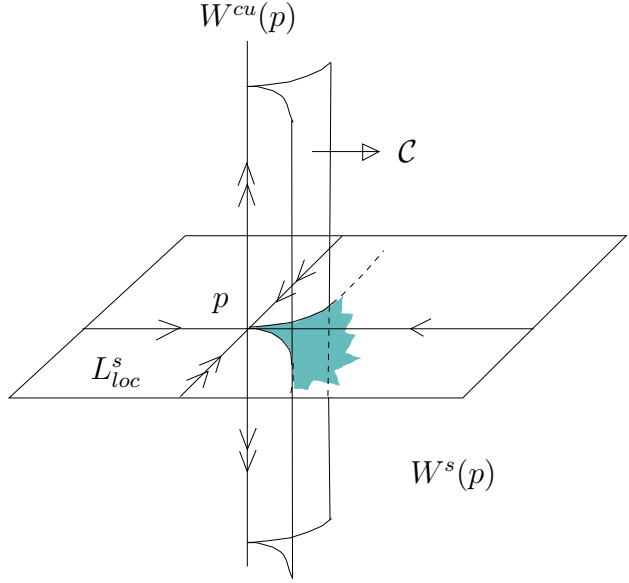}
\caption{\small{Geometry of a chain-recurrence class $\cC$ near a stable cuspidal
fixed point.}} \label{FiguraCostado}
\end{center}\end{figure}

\begin{obs}\label{RmqCuspidalRobust} If $p$ is a stable cuspidal point,
then the hyperbolic continuation $p_g$ is still stable cuspidal for every $g$
that is $C^1$-close to $f$.
Indeed, there exists a compact set $\Delta\subset L^s(p)$ which meets
every orbit of $L^s(p)\setminus \{p\}$ and which is disjoint from $\cR(f)$.
By semi-continuity of the chain-recurrent set, a small neighborhood $V$
of $\Delta$ is disjoint from $\cR(g)$ for any $g$ close to $f$ and meets
every orbit of the continuation of $L^s(p)\setminus \{p\}$.
\end{obs}

\subsection{Description of the mechanism} Let $x$ be a point in a chain-recurrence class $\cC$. We introduce the following assumptions
(see figure \ref{FiguraMecanismo}).
\begin{itemize}
\item[(H1)] $\cC$ contains two periodic points $p,q$ such that $\dim(E^s_{p})=\dim(E^s_{q})+1$.
\item[(H2)] The point $x$ belongs to $W^{u}(p)\cap W^{s}(q)$. The union $\Lambda$ of the orbits of $x,p,q$
has a partially hyperbolic decomposition with a one-dimensional central bundle.
There exists a $Df$-invariant continuous orientation of the central bundle over $\Lambda$
\item[(H3)] For the central orientation on $\Lambda$:
\begin{itemize}\item[(i)] the point $p$ is a right stable cuspidal point;
    \item[(ii)] the point $q$ is a left unstable cuspidal point.\end{itemize}
\end{itemize}
Note that from remark \ref{RmqOrientationCentral} and the fact that a central orientation is preserved for cuspidal points,
a $Df$-invariant continuous orientation of the central bundle over $\Lambda$ always exists.
The following proposition implies the isolation point criterion stated in the introduction.

\begin{figure}[ht]\begin{center}
\input{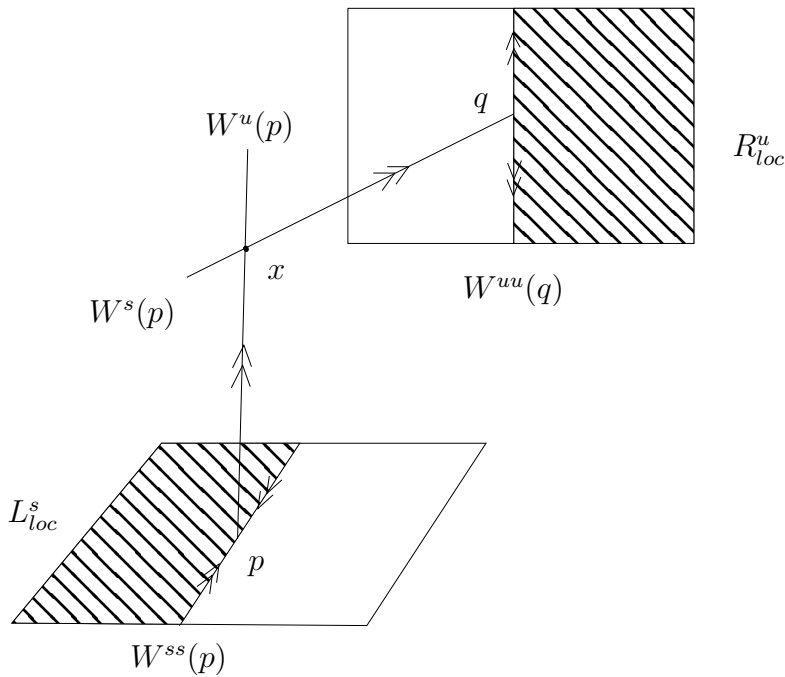}
\caption{\small{Hypothesis (H1)-(H3).}} \label{FiguraMecanismo}
\end{center}\end{figure}

\begin{prop}\label{propMecanisme}
Under (H1)-(H3), the point $x$ is isolated in the chain-recurrence class $\cC$.
In particular, $\cC$ is not transitive and is not a homoclinic class.
\end{prop}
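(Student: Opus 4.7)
The plan is to extend the partially hyperbolic structure and the invariant central orientation from $\Lambda = O_p \cup O_q \cup O_x$ (where $O_x$ denotes the orbit of $x$) to a neighborhood $W$ in $M$, apply the Hirsch--Pugh--Shub plaque family theorem to obtain locally invariant continuous plaque families $W^{ss}_{loc}$, $W^{cs}_{loc}$, $W^{cu}_{loc}$, $W^{uu}_{loc}$ on $W$ with local product structure, and then use the cuspidal obstructions at $p$ and $q$ to rule out any point of $\cC$ close to $x$ other than $x$ itself. On $\Lambda$ these plaques coincide with the classical invariant manifolds; in particular $W^{cs}_{loc}(p)$ is an open disk inside $W^s(p)$ split by $W^{ss}_{loc}(p)$ into its local right and left half stable components, and $W^{cu}_{loc}(q)\subset W^u(q)$ is split by $W^{uu}_{loc}(q)$ into its right and left local half unstable components. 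Following Remark~\ref{RmqCuspidalRobust}, hypotheses (H3) yield open neighborhoods $N_p$ of a fundamental domain of $L^s(p)\setminus\{p\}$ and $N_q$ of a fundamental domain of $R^u(q)\setminus\{q\}$ which are disjoint from $\cR(f)$, and hence from $\cC$.

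Assume for contradiction that there is a sequence $y_k\in\cC\setminus\{x\}$ with $y_k\to x$. The local product structure at $x$ decomposes $y_k$ into coordinates along $E^{ss}_x$, $E^c_x$ and $E^{uu}_x$. I would treat three displacement types in turn. If $y_k$ has a nonzero strong-unstable component, iterate forward: the center-stable plaque through $f^n(y_k)$ follows the orbit of $x$ and approaches $W^{cs}_{loc}(q)$, while $f^n(y_k)$ leaves this plaque in the strong-unstable direction at an exponential rate. Choosing the largest $n$ for which $f^n(y_k)$ stays in a prescribed small tube around the orbit of $x$, the local product structure near $q$ together with the invariant central orientation place this forward iterate inside the forbidden region $N_q$, contradicting $\cC\cap N_q=\emptyset$. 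If $y_k$ has a nonzero strong-stable component, a symmetric argument under backward iteration produces an iterate inside $N_p$. Finally, if $y_k$ lies on the local center arc $W^{cs}_{loc}(x)\cap W^{cu}_{loc}(x)$ through $x$, the invariant central orientation splits this arc at $x$ into a right and a left sub-arc: if $y_k$ is to the right, its forward iterates converge to a point on the right of $W^{uu}_{loc}(q)$ inside $W^u_{loc}(q)$, hence inside $N_q$; if $y_k$ is to the left, its backward iterates land on the left of $W^{ss}_{loc}(p)$ inside $W^s_{loc}(p)$, hence inside $N_p$. In every case we reach a contradiction, giving $\cC\cap V = \{x\}$ for some neighborhood $V$ of $x$.

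The main technical obstacle lies in the first two displacement types, where a small strong-stable or strong-unstable displacement at $x$ must be converted into a \emph{definite} position inside the forbidden neighborhood $N_p$ or $N_q$ after iteration. This requires careful bookkeeping of how the center coordinate of an iterate evolves under the non-uniform central dynamics along the heteroclinic orbit, and a decisive use of the $Df$-invariant central orientation to identify which side of $W^{uu}_{loc}(q)$ (respectively $W^{ss}_{loc}(p)$) the iterated point lies on. The essential point is that the orientation is transported rigidly along the orbit of $x$, so the cuspidal conditions at $p$ and $q$ are oriented compatibly and together rule out every nonzero displacement from $x$ inside $\cC$.
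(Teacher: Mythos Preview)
Your overall strategy --- partially hyperbolic coordinates at $x$, then pushing nearby points of $\cC$ into the forbidden neighborhoods $N_p$, $N_q$ by iteration --- is the same as the paper's, but your case analysis has a genuine gap. In your first case you assume only that the strong-unstable component of $y_k$ is nonzero and conclude that some forward iterate lands in $N_q$. This fails when the central component is negative and large compared to the strong-unstable one: by domination the $uu$-displacement eventually dominates the $c$-displacement, but if initially $|\epsilon_c|\gg|\epsilon_{uu}|$ the orbit can exit the tube around the orbit of $x$ while the central coordinate is still of definite size and negative. At that moment $f^n(y_k)$ sits near the \emph{left} half-unstable region of $q$, and the cuspidal hypothesis on $q$ says nothing about $L^u(q)\cap\cC$. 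The invariant orientation that you invoke only labels which side is ``right''; it does not force the iterate onto that side. You flag this step as the main obstacle, but the resolution you propose (``the orientation is transported rigidly'') does not address it. The symmetric problem occurs in your second case.

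The paper circumvents this by organizing the dichotomy on the \emph{sign of the central coordinate} rather than on which component is nonzero. A chart at $x$ is split into the half-boxes $\{c\geq 0\}$ and $\{c\leq 0\}$. On $\{c\geq 0\}$ a $\lambda$-lemma type statement (Lemma~\ref{LemaLambdaLema}, proved by graph transform) shows that every point off $W^s(q)$ has a forward iterate near a fundamental domain of $R^u(q)$; the key is that the lemma is applied to $(d^u{+}1)$-dimensional \emph{half}-plaques combining the full $E^{uu}$-directions with the nonnegative central direction, so the image is forced to the right of $W^{uu}_{loc}(q)$. Symmetrically $\{c\leq 0\}$ is handled by backward iteration and the cuspidal condition at $p$. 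One obtains $\cC\cap\{c\geq 0\}\subset W^s(q)$ and $\cC\cap\{c\leq 0\}\subset W^u(p)$, whose intersection near $x$ is $\{x\}$. If you reorganize your three cases around the sign of $c$ rather than around which of $ss$, $c$, $uu$ is nonzero, your argument becomes correct and essentially coincides with the paper's.
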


\subsection{Proof of proposition~\ref{propMecanisme}}
Let $q$ be a periodic point whose orbit is partially hyperbolic and
whose central bundle is one-dimensional and unstable.  We shall assume that there is an orientation in $E^c_q$ which is preserved by $Df$.
We fix such an orientation of the central bundle $E^c_q$,
so that the left and right half unstable manifolds of $q$ are defined.
We denote by $d^u+1$ the unstable dimension of $q$.

Any $x \in W^s(q)$ has uniquely defined stable $E^s_x$ and center stable $E^{cs}_x$ directions: the first one is the tangent space $T_xW^s(q)$; a vector
$v\in T_xM\setminus \{0\}$ belongs to the second if the direction of its positive iterates
$Df^{n}(v)$ stays away from the directions of $E^{uu}_q$.
If $E'\subset E$ are two vector subspaces of $T_xM$
such that $E$ is transverse to $E^s_x$ and $E'$ is transverse to $E^{cs}_x$
(hence $E'$ is one-codimensional in $E$), then $F=E^{cs}_x\cap E$ is a one-dimensional space whose forward iterates converge to the unstable bundle over the orbit of $q$. As a consequence, there exists an orientation of $F$ which converges to the orientation of the central bundle by forward iterations.

There is thus a connected component of $E\setminus E'$, such that it intersects $F$ in the orientation of
$F$ which converges towards the central orientation, its closure is the \emph{right half plane} of $E\setminus E'$.
The closure of the other component is the \emph{left half plane} of $E\setminus E'$.

%by forward iterations:
%\begin{itemize}
%\item[--] the sequence $Df^n(E)$ converges towards the unstable space of the orbit of $q$;
%\item[--] for $v\in E\setminus E'$, the direction of $Df^n.v$  converge towards the central bundle.
%\end{itemize}
%The set of $v\in E\setminus E'$, such that the direction of
%$Df^n.v$ converge towards the central orientation, is a component of $E\setminus E'$.
%Its closure is the \emph{right half plane} of $E\setminus E'$.
%The closure of the other component is the \emph{left half plane} of $E\setminus E'$.

Consider a $C^1$-embedding $\varphi\colon [-1,1]^{d^u} \to M$ such that $x:=\varphi(0)$
belongs to $W^s(q)$.

\begin{defi}
The embedding $\varphi$ is {\em coherent with the central orientation at $q$} if
\begin{itemize}
\item[--] $E:=D_0\varphi(\RR^{d^u+1})$ and $E':=D_0\varphi(\{0\}\times \RR^{d^u})$
are transverse to $E^s_z,E^{cs}_z$ respectively;
\item[--] the half-plaque $\varphi([0,1]\times [-1,1]^{d^u})$ is tangent to the right half-plane of $E\setminus E'$.
\end{itemize}
\end{defi}
\medskip

Let $\Delta^u$ be a compact set contained in $R^u(q)\setminus \{q\}$
which meets each orbit of $R^u(q)\setminus \{q\}$.

\begin{lema}\label{LemaLambdaLema} Let $\{\varphi_a\}_{a\in \cA}$ be a continuous family
of $C^1$-embeddings that are coherent with the central orientation at $q$.
Consider some $a_0\in \cA$ and a neighborhood $V^u$ of $\Delta^u$.

Then, there exist $\delta>0$ and some neighborhood $A$ of $a_0$
such that any point $z\in \varphi_a([0,\delta]\times [-\delta,\delta]^{d^u})$
different from $\varphi_a(0)$ has a forward iterate in $V^u$.
\end{lema}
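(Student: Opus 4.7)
The plan is to combine transversality of each $\varphi_a$ to $W^s(q)$ at $\varphi_a(0)$ with the $\lambda$-lemma, interpreted with the central orientation, to show that iterates of the half-plaque accumulate on the \emph{right} half of the local unstable manifold of the orbit of $q$; a fundamental-domain argument then forces each noncentral point to visit $V^u$ in finitely many iterations.

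First I would fix a small product neighborhood $W$ of the orbit of $q$ adapted to the partially hyperbolic splitting, chosen so that $W$ is disjoint from $V^u$ and so that every orbit leaving $W$ through the right local unstable manifold has a subsequent iterate in $V^u$; this is possible by shrinking $V^u$ around $\Delta^u$. Next, coherence of $\varphi_a$ says that $E_a = D_0\varphi_a(\RR^{d^u+1})$ is transverse to $E^s_{\varphi_a(0)}$, and since $\varphi_a(0)\in W^s(q)$, the implicit function theorem together with continuity of the family in $a$ yields $\delta_0>0$ and a neighborhood $A_0$ of $a_0$ such that for every $a\in A_0$,
\[
\varphi_a([-\delta_0,\delta_0]^{d^u+1})\cap W^s(q)=\{\varphi_a(0)\}.
\]
In particular, any $z$ in the half-plaque $\varphi_a([0,\delta_0]\times[-\delta_0,\delta_0]^{d^u})$ other than $\varphi_a(0)$ lies off $W^s(q)$, so its forward orbit does not converge to the orbit of $q$.

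Next I would apply the $\lambda$-lemma uniformly on the family $\{\varphi_a\}_{a\in \cA}$. Because each $\varphi_a$ is a $C^1$-embedding transverse to $W^s(q)$ at $\varphi_a(0)$, there exist $N\in\NN$, $\delta\in(0,\delta_0]$, and a neighborhood $A\subset A_0$ of $a_0$ such that for every $a\in A$, the image $f^N(\varphi_a([-\delta,\delta]^{d^u+1}))$ contains a $(d^u+1)$-disk $\Gamma_a$ centered at $f^N(\varphi_a(0))$ that is $C^1$-close to a disk in the local unstable manifold of the point $q_N$ of the orbit of $q$ nearest to $f^N(\varphi_a(0))$. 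The key point is that, by the $Df$-invariance of the central orientation and the coherence of $\varphi_a$, the image $f^N(\varphi_a([0,\delta]\times[-\delta,\delta]^{d^u}))$ of the half-plaque is $C^1$-close to the \emph{right} half $R^u_{loc}(q_N)$, and not to the left one.

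Finally, for $z$ in the half-plaque with $z\neq\varphi_a(0)$: the point $f^N(z)$ lies on $\Gamma_a$, off the local stable manifold of the orbit of $q_N$; subsequent iterates push it along $R^u(q_N)$ out of $W$ within finitely many steps, exiting through a region whose next iterate is in $V^u$. Uniformity in $z$ follows by contradiction: a sequence of counterexamples $a_k\to a_0$, $z_k\to\varphi_{a_0}(0)$ whose forward orbits avoid $V^u$ would, by the $C^1$-approximation $\Gamma_{a_k}\approx R^u_{loc}(q_N)$ together with uniform unstable expansion, have a uniformly bounded exit time through the right fundamental domain, which is incompatible with avoiding $V^u$. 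The main obstacle is the third step, i.e., ensuring that the $C^1$-limit of the iterated half-plaque is the \emph{right} half $R^u_{loc}(q_N)$ rather than its left counterpart; this uses essentially the $Df$-invariance of the central orientation on the orbit of $q$ (Remark~\ref{RmqOrientationCentral}) combined with the coherence hypothesis on $\varphi_a$.
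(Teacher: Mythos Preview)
Your overall strategy coincides with the paper's: use a strengthened $\lambda$-lemma together with the coherence hypothesis to see that the iterated half-plaque accumulates on $R^u_{loc}(q)$, then a fundamental-domain argument to land in $V^u$. The gap is in your final step. Fixing a \emph{single} $N$ so that $f^N$ of the half-plaque is $C^1$-close to a small piece of $R^u_{loc}$ is fine, but the assertion that each $f^N(z)$ then ``exits $W$ through a region whose next iterate is in $V^u$'' is precisely what has to be proved, and your contradiction argument does not prove it. The claim that counterexamples $z_k\to\varphi_{a_0}(0)$ would have ``uniformly bounded exit time through the right fundamental domain'' is false: as $z_k$ approaches the center the exit time tends to infinity. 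What must be controlled is not the exit \emph{time} but the exit \emph{side}: for $z$ arbitrarily close to $\varphi_a(0)$ one needs that, at the (very late) moment of exit, the iterated half-plaque is still $C^1$-close to $R^u_{loc}$ and its codimension-one boundary still $C^1$-close to $W^{uu}_{loc}$. This requires the $\lambda$-lemma for \emph{all} large $n$, not a single $N$.

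The paper organizes exactly this via a graph-transform claim: there are nested decreasing disks $D_{a,n}\subset[-1,1]^{d^u+1}$ and subdisks $D'_{a,n}\subset\{0\}\times[-1,1]^{d^u}$, all containing $0$, such that for every $n\geq N$ the image $f^{n\tau}(\varphi_a(D_{a,n}))$ is a graph over the \emph{full} local unstable disk $D^u$ and $f^{n\tau}(\varphi_a(D'_{a,n}))$ is a graph over $D^{uu}$. Any $z\neq 0$ in a sufficiently small cube then lies in $D_{a,n}\setminus D_{a,n+1}$ for some $n\geq N$, which forces $f^{n\tau}(\varphi_a(z))$ into a small neighborhood of the fundamental domain $D^u\setminus f^{-\tau}(D^u)$; coherence places the right half near $D^{u,+}$, and a fixed bounded number of further iterates carries any point near $\overline{D^{u,+}}\setminus f^{-\tau}(D^u)$ into $V^u$. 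This nested-disk bookkeeping is the piece your argument is missing; once you insert it, your compactness/contradiction paragraph becomes unnecessary.
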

\begin{proof}
Let $\tau\geq 1$ be the period of $q$ and
$\chi\colon [-1,1]^d\to M$ be some coordinates such that
\begin{itemize}
\item[--] $\chi(0)=q$;
\item[--] the image $D^u:=\chi((-1,1)\times\{0\}^{d-d^u-1}\times (-1,1)^{d^u})$ is contained in $W^u_{loc}(q)$;
\item[--] the image $D^{uu}:=\chi(\{0\}^{d-d^u}\times (-1,1)^{d^u})$ is contained in $W^{uu}_{loc}(q)$;
\item[--] the image $D^{u,+}:=\chi([0,1)\times\{0\}^{d-d^u-1}\times (-1,1)^{d^u})$ is contained in $R^u(q)$;
\item[--] $f^{-\tau}(\overline{D^u})$ is contained in $D^{u}$.
\end{itemize}
One deduces that there exists $n_0\geq 0$ such that:
\begin{itemize}
\item[(i)] Any point $z$ close to
$\overline{D^{u,+}}\setminus f^{-\tau}(D^u)$ has an iterate $f^k(z)$, $|k|\leq n_0$, in $V^u$.
\end{itemize}

The graph transform argument (see for instance~\cite[section 6.2]{KH})
gives the following generalization of the $\lambda$-lemma.

\begin{af}
There exists $N\geq 0$ and, for all $a$ in a neighborhood $A$ of $a_0$,
there exist some decreasing sequences of disks $(D_{a,n})$ of $[-1,1]^{d^u+1}$
and $(D'_{a,n})$ of $\{0\}\times [-1,1]^{d^u}$ which contain $0$ and
such that for any $n\geq N$ one has, in the coordinates of $\chi$:
\begin{itemize}
\item[--] $f^{n\tau}(D_{a,n})$ is the graph of a function $D^u\to \RR^{d-d^u-1}$
that is $C^1$-close to $0$;
\item[--] $f^{n\tau}(D'_{a,n})$ is the graph of a function $D^{uu}\to \RR^{d-d^u}$
that is $C^1$-close to $0$.
\end{itemize}
\end{af}

Let us consider $a\in A$.
The image by $f^{n\tau}$ of each component of $D_{a,n}\setminus D'_{a,n}$ is
contained in a small neighborhood of a component of $D^{u}\setminus D^{uu}$.
The graph $f^{n\tau}(D'_{a,n})$ which is transverse to a constant cone field
around the central direction at $q$.
Since $\varphi$ is coherent with the central orientation at $q$, one deduces that
\begin{itemize}
\item[(ii)] $f^{n\tau}\circ\varphi_a\left(([0,1]\times [-1,1]^{d^u})\cap D_{a,n}\right)$
is contained in a small neighborhood of $D^{u,+}$.
\end{itemize}

For $\delta>0$ small, any point $z\in \varphi_a([-\delta,\delta]\times [-\delta,\delta]^{d^u})$ different from $\varphi_a(0)$
belongs to some $D_{a,n}\setminus D_{a,n+1}$, with $n\geq N$. Consequently:
\begin{itemize}
\item[(iii)] Any $z\in \varphi_a([-\delta,\delta]\times [-\delta,\delta]^{d^u})\setminus
\{\varphi_a(0)\}$ has a forward iterate in $D^u\setminus f^{-1}(D^u)$.
\end{itemize}
Putting the properties (i-iii) together, one deduces the announced property.
\end{proof}

\begin{proof}[Proof of proposition~\ref{propMecanisme}]
We denote by $d^s+1$ (resp. $d^u+1$) the stable dimension of $p$ (resp. the unstable dimension of $q$)
so that the dimension of $M$ satisfies $d=d^s+d^u+1$.
Consider a $C^1$-embeddeding $\varphi: [-1,1]^{d} \to M$ with $\varphi(0)=x$ such that:
\begin{itemize}
\item[--] $\varphi(\{0\}\times [-1,1]^{d^s} \times \{0\}^{d^u})$ is contained in $W^s(q)$;
\item[--] $\varphi(\{0\}\times \{0\}^{d^s} \times [-1,1]^{d^u})$ is contained in $W^u(p)$;
\item[--] $D_0\varphi.(1,0^{d^s},0^{d^u})$ is tangent to $E^c_x$
and has positive orientation.
\end{itemize}

Note that all the restrictions of $\varphi$ to
$[-1,1]\times \{a^s\}\times [-1,1]^{d^u}$
for $a^s\in \RR^{d^s}$ close to $0$,
are coherent with the central orientation at $q$.

Consider a compact set $\Delta^u\subset R^u(q)\backslash \{q\}$
that meets each orbit of $R^u(q)\backslash \{q\}$. Since $\cC$ is closed
and $q$ is unstable cuspidal,
there is a neighborhood $V^u$ of $\Delta^u$ in $M$ that is disjoint from $\cC$.
The lemma~\ref{LemaLambdaLema} can be applied: the points in
$\varphi([0,\delta]\times \{a^s\}\times [-\delta,\delta]^{d^u})$
distinct from $\varphi(0,a^s,0^{d^u})$ have an iterate in $V^u$, hence do not belong to $\cC$.
This shows that
$$\cC\cap \varphi\left([0,\delta]\times [-\delta,\delta]^{d-1}\right)
\subset \varphi\left(\{0\}\times [-\delta,\delta]^{d^s}\times \{0\}^{d^u}\right).$$

From (H2), if one reverses the central orientation and if one considers
the dynamics of $f^{-1}$, then all the restrictions of $\varphi$ to
$[-1,1]\times [-1,1]^{d^s}\times \{a^u\}$
for $a^u\in \RR^{d^u}$ close to $0$,
are coherent with the central orientation at $p$.
One can thus argues analogously and gets:
$$\cC\cap \varphi\left([-\delta,0]\times [-\delta,\delta]^{d-1}\right)
\subset \varphi\left(\{0\}\times \{0\}^{d^s}\times [-\delta,\delta]^{d^u}\right).$$
Both inclusions give that
$$\cC\cap \varphi\left([-\delta,\delta]^{d}\right)=\{\varphi(0)\},$$
which says that $x=\varphi(0)$ is isolated in $\cC$.
\end{proof}

\section{Construction of the example}\label{sectionExample}
In this part we build a collection of diffeomorphisms satisfying the properties (I) and (II) stated in the theorem.
The construction will be made only in dimension $3$ for notational purposes.
The generalization to higher dimensions is straightforward.

\subsection{Construction of a diffeomorphism}
Let us consider an orientation-preserving $C^\infty$ diffeomorphism $H$ of the plane $\RR^2$
and a closed subset $D=D^-\cup C\cup D^+$ such that:
\begin{itemize}
\item[--] $H(\overline D)\subset \interior(D)$ and
$H(\overline{D^-\cup D^+})\subset \interior(D^-)$;
\item[--] the forward orbit of any point in $D^-$ converges towards a sink $S\in D^-$;
\item[--] $C$ is the cube $[0,5]^2$ whose maximal invariant set is a hyperbolic horseshoe.
\end{itemize}
On $C\cap H^{-1}(C)$ the map $H$ is piecewise linear, it preserves and contracts by $1/5$ the horizontal direction
and it preserves and expands by $5$ the vertical direction (see figure~\ref{herradura}):
\begin{itemize}
\item[--] The set $C\cap H(C)$ is the union of $4$ disjoint vertical bands $I_1,I_2,I_3,I_4$ of width $1$.
We will assume that $I_1\cup I_2 \en (0,2+\frac 1 3)\times [0,5]$ and
$I_3\cup I_4 \en (2+\frac 2 3,5)\times [0,5]$.
\item[--] The preimage $H^{-1}(C)\cap C$ is the union of $4$ horizontal bands $H^{-1}(I_i)$.
We will assume that $H^{-1}(I_1\cup I_2) \en [0,5]\times (0,2+\frac 1 3)$ and
$H^{-1}(I_3\cup I_4) \en [0,5]\times (2+\frac 2 3,5)$.
\end{itemize}
\begin{figure}[ht]\begin{center}
\input{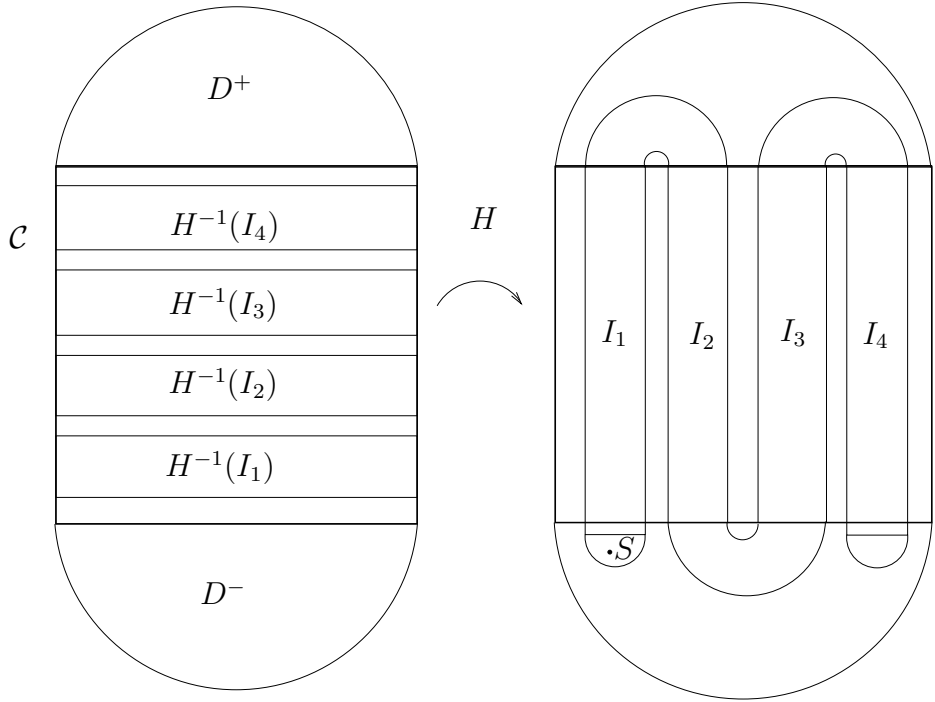}
\caption{\small{The map $H$.}} \label{herradura}
\end{center}\end{figure}

We define a $C^\infty$ diffeomorphism $F$ of $\RR^3$
whose restriction to a neighborhood of $D\times [-1,6]$
it is a skew product of the form
$$F\colon (x,t)\mapsto (H(x), g_x(t)),$$
where the diffeomorphisms $g_x$ are orientation-preserving and satisfy (see figure \ref{dimension1}):

\begin{itemize}
\item[(P1)] $g_x$ does not depend on $x$ in the sets $H^{-1}(I_i)$ for every $i=1,2,3,4$.
\smallskip

\item[(P2)] For every $(x,t)\in D\times [-1,6]$ one has $4/5 < g_x'(t) <6/5$.
\smallskip

\item[(P3)] $g_x$ has exactly two fixed points inside $[-1,6]$,
which are $\{0,4\}$, $\{3,4\}$, $\{1,2\}$ and $\{1,5\}$, when $x$ belongs to
$H^{-1}(I_i)$ for $i$ respectively equal to $1,2,3$ and $4$. All fixed points are hyperbolic, moreover,  
\begin{itemize}
\item[-] $g'_x(t) < 1$ for $t \in [-1, 3+1/2]$ and $x\in
H^{-1}(I_1) \cup H^{-1}(I_2)$.
\item[-] $g'_x(t) > 1$ for $t\in [1+1/2,6]$ and
$x\in  H^{-1}(I_3) \cup H^{-1}(I_4)$.
\end{itemize}
\smallskip

\item[(P4)] For every $(x,t)\in (D^-\cup D^+)\times[-1,6]$ one has $g_x(t)>t$.
\end{itemize}
\begin{figure}[ht]\begin{center}
\input{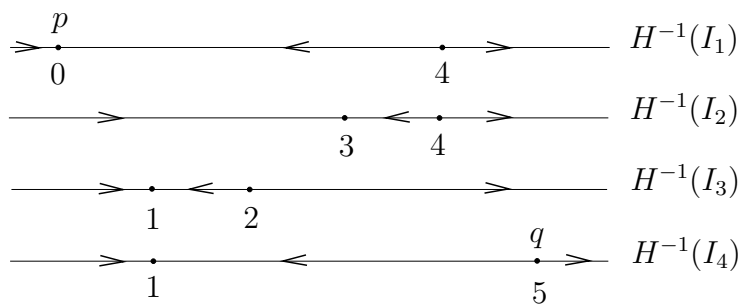}
\caption{\small{The map $g_x$ above each rectangle $H^{-1}(I_i)$.}} \label{dimension1}
\end{center}\end{figure}
We assume furthermore that the following properties are satisfied:
\begin{itemize}
\item[(P5)] $F(D\times [6,8])\subset \interior(D\times [6,8])$;
\item[(P6)] there exists a sink which attracts the orbit of any point of $D\times [6,8]$;
\item[(P7)] $F$ coincides with a linear homothety outside a compact domain;
\item[(P8)] any forward orbit meets $D\times [-1,8]$.
\end{itemize}

One can build a diffeomorphism which coincides with the identity
on a neighborhood of the boundary of $D_0\times (-2,9)$ and coincides with $F$ in $D\times (-1,8)$ ($D_0$ denotes a small neighborhood of $D$ in $\RR^2$).
This implies that, on any $3$-dimensional manifold, every isotopy class of diffeomorphisms contains an element whose restriction to an invariant set is $C^\infty$-conjugated to $F$.
\medskip

On any $3$-dimensional manifold, one can consider an orientation-preserving
Morse-Smale diffeomorphism and by surgery replace the dynamics on
a neighborhood of a sink by the dynamics of $F$. We denote by $f_0$ the obtained diffeomorphism.

\subsection{First robust properties}\label{ss.robust}
We list some properties satisfied by $f_0$, which are also satisfied by any diffeomorphism
$f$ in a small $C^1$-neighborhood $\cU_0$ of $f_0$.
\begin{description}
\item[Fixed points] By (P3),
in each rectangle $\interior(I_i)\times (-1,6)$,
there exists two hyperbolic fixed points $p_i,q_i$. Their stable dimensions are respectively equal to $2$ and $1$.
Since $p_1$ and $q_4$ will play special roles, we shall denote them as $p=p_1$ and $q=q_4$.
\smallskip

\item[Isolation] The two open sets $V_0=\interior(D)\times (-1,8)$
and $V_1= V_0\setminus (C\times [-1,6])$ are isolating blocks, i.e. satisfy $f(\overline{V_0}) \en V_0$ and $f(\overline{V_1}) \en V_1$.

For $V_0$, the property follows immediatly from the construction.
The closure of the second set $V_1$ can be decomposed as the union of:
\begin{itemize}
\item[--] $D^+\times [-1,6]$, which is mapped into $(D^-\times [-1,6])\cup (D\times [6,8])$,
\item[--] $D^-\times [-1,6]$ which is also mapped into $(D^-\times [-1,6])\cup (D\times [6,8])$
and moreover has a foward iterate in $D\times [6,8]$ by (P4),
\item[--] $D\times [6,8]$ which is mapped into itself and whose limit set is a sink.
\end{itemize}

Hence, any chain-recurrence class which meets the rectangle $C \times [-1,6]$
is contained inside. The maximal invariant set of $f$ in $C\times [-1, 6]$
will be denoted by $\cC_f$.

Any chain-recurrence class which meets $V_1$ coincides with the sink of $D\times [6,9]$.
\smallskip

\item[Partial hyperbolicity (property (III) of the theorem)] On $C\times [-1,6]\subset \RR^3$,
there exists some narrow cone fields $\cE^s,\cE^{cs}$
around the coordinate direction $(1,0,0)$ and the plane $(x,0,z)$ which are invariant by
$Df^{-1}$. The vectors tangent to $\cE^s$ are uniformly expanded by $Df^{-1}$.
Similarly there exists some forward invariant cone fields $\cE^u,\cE^{cu}$
close to the direction $(0,1,0)$ and the plane $(0,y,z)$.

In particular $\cC_f$ is partialy hyperbolic.
Moreover the tangent map $Df$ preserves the orientation of the central direction such that
any positive unitary central vector is close to the vector $(0,0,1)$.
\smallskip

\item[Central expansion] Property (P2) holds for $f$ when one replaces
the derivative $g'_x(t)$ by the tangent map  $\|Df|_{E^c}(x,t)\|$ along the central bundle.
\smallskip

\item[Properties (H2) and (H3)] The point $p$ is stable cuspidal and the point $q$ is unstable cuspidal.
More precisely the left half plaque of $W^s(p)$ and the right half plaque of $W^u(q)$ are disjoint from $\cC_f$:
since the chain-recurrence classes of $p$ and $q$ are contained in $\cC_f$ this implies property (H3).
Moreover if there exists an intersection point $x\in W^u(p)\cap W^s(q)$ for $f$, then by the isolating property
it is contained in $\cC_f$. By preservation of the central orientation,
(H2) holds also.

Let us explain how to prove these properties:
it is enough to discuss the case of the left half-plaque of $W^{s}(p)$ and
(arguing as in remark~\ref{RmqCuspidalRobust}) to assume that $f=f_0$.
From (P2) and (P3), we have:
\begin{itemize}
\item[--] every point in $C\times [-1,0)$ has a backward iterate outside $C\times [-1,6]$;
\item[--] the same holds for every point in $(C\setminus I_1)\times \{0\}$;
\item[--] any point in $I_1\times \{0\}$ has some backward image in $(C\setminus I_1)\times \{0\}$, unless it belongs to $W^{u}(p)$.
\end{itemize}
Combining these properties, one deduces that the connected component of
$W^{s}(p)\cap (C\times [-1,0])$ containing $p$ intersects $\cC_f$ only at $p$.
Note that this is a left half plaque of $W^s(p)$, giving the required property.
\smallskip

\item[Hyperbolic regions] By (P3),
the maximal invariant set in $Q_p:=[0,5]\times [0,2+\frac 1 3] \times [-1,3+\frac 1 2]$
and $Q_q:=[0,5]\times [2+\frac 2 3, 5] \times [1+\frac 1 2,6]$ are two locally maximal transitive hyperbolic sets, denoted by $K_p$ and $K_q$.
Their stable dimensions are $2$ and $1$ respectively.
The first one contains $p,p_2$,
the second one contains $q,q_3$.
\smallskip

\item[Tameness (property (4) of the theorem)] Since $f_0$ has been obtained by
sur\-ge\-ry of a Morse-Smale diffeomorphism, the chain-recurrent set
in $M\setminus \cC_f$ is a finite union of hyperbolic periodic orbits.
\end{description}
\medskip

Any $x\in \cC_f$ has a strong stable manifold $W^{ss}(x)$.
Its \emph{local} strong stable manifold $W^{ss}_{loc}(x)$ is the connected component containing $x$ of the intersection $W^{ss}(x) \cap C\times [-1,6]$.
It is a curve bounded by $\{0,5\} \times [0,5] \times [-1,6]$.
Symmetrically, we define $W^{uu}(x)$ and $W^{uu}_{loc}(x)$.
%
%For hyperbolic periodic points $x\in \C$ we also define $W^{s}_{loc}(x)$
%as the connected component of $W^{s}(x)\cap (C\times [-1,6])$
%containing $x$ and $W^u_{loc}(x)$ similarly.

\subsection{Central behaviours of the dynamics}
We analyze the local strong stable and strong unstable manifolds
of points of $\cC_f$ depending on their central position.
\begin{lema}\label{lemaRegions} There exists an open set $\cU_1 \en \cU_0$ such that for every $f\in \cU_1$ and $x\in\cC_f$:
\begin{itemize}
\item[(R1)] If $x\in R_1 := C\times [-1,4+\frac 1 2]$,
then $W^{uu}_{loc}(x) \cap W^s(p) \neq \emptyset$.
\item[(R2)] If $x\in R_2 := C\times [\frac 1 2,6]$,
then $W^{ss}_{loc}(x) \cap W^u(q) \neq \emptyset$.
\item[(R3)] If $x\in R_3 := C\times [\frac 1 2, 2+ \frac 1 2]$,
then $W^{ss}_{loc}(x) \cap W^{uu}_{loc}(y) \neq \emptyset$ for some $y\in K_p$.
\item[(R4)] If $x\in R_4 := C\times [2+\frac 1 2, 4+\frac 1 2]$,
then $W^{uu}_{loc}(x) \cap W^{ss}_{loc}(y) \neq \emptyset$ for some $y\in K_q$.
\end{itemize}
Moreover $p_2$ belongs to $R_2$ and $q_3$ belongs to $R_1$.
\end{lema}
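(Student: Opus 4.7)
I verify the four properties first for the explicit model $f_{0}$, then let $\cU_{1}$ be a sufficiently small $C^{1}$-neighborhood of $f_{0}$ inside $\cU_{0}$: each intersection statement is an open condition, the relevant invariant objects (strong stable and unstable foliations, $W^{s}(p_{f})$, $W^{u}(q_{f})$, and the hyperbolic continuations $K_{p,f}$, $K_{q,f}$) depend continuously on $f$ in the $C^{1}$-topology, and $R_{i}\cap \cC_{f_{0}}$ is compact, so a single $\cU_{1}$ suffices. The inclusions $p_{2}\in R_{2}$ and $q_{3}\in R_{1}$ are immediate from (P3): the central coordinates are $3\in[\tfrac{1}{2},6]$ and $2\in[-1,\tfrac{9}{2}]$ respectively.

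\textbf{Properties (R1) and (R2).} Let $x=(u_{0},v_{0},t_{0})\in \cC_{f_{0}}\cap R_{1}$. Since $f_{0}$ is the skew product $F$, the leaf $W^{uu}_{loc}(x)$ equals the vertical segment $\{u_{0}\}\times[0,5]\times\{t_{0}\}$, and $(u_{0},v^{*},t_{0})\in W^{s}(p)$ if and only if $(u_{0},v^{*})\in W^{s}_{H}(x_{1})$ and the induced central orbit tends to $0$. Because $\Lambda_{H}$ is a full shift on four symbols and $u_{0}\in\pi_{u}(\Lambda_{H})$, for any prescribed future itinerary in $\{1,2,3,4\}^{\NN}$ there exists a unique $v^{*}\in[0,5]$ with $(u_{0},v^{*})\in\Lambda_{H}$ realizing it. If $t_{0}<4$, the constant future $(1,1,\ldots)$ works: $g_{1}$ is a contraction of $[-1,\tfrac{7}{2}]$ by (P3) fixing $0$, so $g_{1}^{n}(t_{0})\to 0$. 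If $t_{0}\in[4,\tfrac{9}{2}]$, I prepend a finite block of $4$'s: the attracting fixed point $1$ of $g_{4}$ attracts the forward orbit of every point in $(1,5)$ monotonically, so $g_{4}^{k}(t_{0})<4$ for $k$ large enough, after which the $(1,1,\ldots)$ tail sends the central iterate to $0$. The chosen $v^{*}$ provides a point in $W^{uu}_{loc}(x)\cap W^{s}(p)$. Property (R2) is obtained by the symmetric argument applied to $f_{0}^{-1}$, $q$, and $W^{ss}_{loc}$.

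\textbf{Properties (R3) and (R4).} The set $K_{p}$ is the maximal invariant set of $F$ in $Q_{p}$, whose horseshoe base is the subshift $\Lambda_{H,12}$ (the full shift on two symbols formed by horseshoe points with itinerary in $\{1,2\}$) and whose fiber maps $g_{1},g_{2}$ are contractions of $[-1,\tfrac{7}{2}]$ (by (P3)) with attracting fixed points $0$ and $3$. Standard skew-product theory then identifies $K_{p}$ with the graph of the unique continuous invariant section $\phi:\Lambda_{H,12}\to[-1,\tfrac{7}{2}]$, and the central projection $\pi_{t}(K_{p})=\phi(\Lambda_{H,12})$ coincides with the attractor of the iterated function system $\{g_{1},g_{2}\}$. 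The derivative bounds $g'_{i}\in(\tfrac{4}{5},1)$ on $[-1,\tfrac{7}{2}]$ given by (P2)-(P3) combined with $g_{1}(0)=0$ and $g_{2}(3)=3$ yield $g_{1}(3)>\tfrac{12}{5}$ and $g_{2}(0)<\tfrac{3}{5}$, so the intervals $g_{1}([0,3])=[0,g_{1}(3)]$ and $g_{2}([0,3])=[g_{2}(0),3]$ overlap and cover $[0,3]$; hence the IFS attractor equals the interval $[0,3]\supset[\tfrac{1}{2},\tfrac{5}{2}]$. For $x\in R_{3}\cap\cC_{f_{0}}$, pick any $y\in K_{p}$ with $t_{y}=t_{0}$; then $W^{ss}_{loc}(x)=[0,5]\times\{v_{0}\}\times\{t_{0}\}$ and $W^{uu}_{loc}(y)=\{u_{y}\}\times[0,5]\times\{t_{0}\}$ meet at $(u_{y},v_{0},t_{0})$, yielding (R3). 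Property (R4) is symmetric: $K_{q}$ is the graph over $\Lambda_{H,34}$ of the invariant section for the contracting skew product $F^{-1}$, whose fiber maps $g_{3}^{-1},g_{4}^{-1}$ have attracting fixed points $2$ and $5$; the analogous overlap estimates $g_{3}^{-1}(5)>\tfrac{9}{2}$ and $g_{4}^{-1}(2)<\tfrac{5}{2}$ give the IFS attractor $[2,5]\supset[\tfrac{5}{2},\tfrac{9}{2}]$.

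\textbf{Main obstacle.} The decisive step is the interval-attractor claim for each iterated function system, which rules out gaps in $\pi_{t}(K_{p})$ and $\pi_{t}(K_{q})$; it rests on the quantitative bounds $g'_{x}\in(\tfrac{4}{5},\tfrac{6}{5})$ of (P2) combined with the fixed-point positions of (P3), which together force the overlap conditions used above. Once these are in place, the horseshoe itinerary combinatorics used for (R1)-(R2) and the $C^{1}$-persistence of the invariant objects passing from $f_{0}$ to $\cU_{1}$ are routine.
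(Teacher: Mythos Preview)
Your argument for the model diffeomorphism $f_0$ is correct and pleasant: identifying $K_p$ as the graph of the invariant section for the contracting skew product over $\Lambda_{H,12}$, and reading off $\pi_t(K_p)=[0,3]$ from the overlap of the IFS $\{g_1,g_2\}$, is a clean alternative to the paper's direct blender computation. Likewise, your itinerary argument for (R1)--(R2) makes explicit what the paper compresses into the sentence ``follow directly from the continuous variation of the stable and unstable manifolds''.

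The gap is in the passage from $f_0$ to a $C^1$-neighborhood $\cU_1$ for (R3)--(R4). You write that ``each intersection statement is an open condition'', but this is false for (R3) and (R4): $W^{ss}_{loc}(x)$ and $W^{uu}_{loc}(y)$ are two one-dimensional curves in a three-manifold, so their intersection is \emph{not} transverse, and for a fixed $y$ it is destroyed by a generic perturbation. What must persist is the statement ``some $y\in K_{p,f}$ works'', and this is exactly the $C^1$-robustness of the blender property. Your IFS overlap establishes it for the skew product $f_0$, but a nearby $f$ need not be a skew product, there is no IFS to speak of, and $K_{p,f}$ is totally disconnected so no intermediate-value argument applies directly. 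Continuous dependence of $K_{p,f}$ in the Hausdorff metric does not by itself force the image of $K_{p,f}$ under strong-unstable holonomy to remain an interval.

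The paper handles this by proving (R3)--(R4) for every $f\in\cU_1$ at once, via the $cs$-strip argument: one thickens $W^{ss}_{loc}(x)$ to a strip tangent to the center-stable cone, and shows (using only the cone fields and the center expansion bound $\|Df^{-1}|_{E^c}\|>\lambda^{-1}$ coming from (P2)) that backward iterates contain $cs$-strips of geometrically growing width until one meets $W^u_{loc}(p)\cup W^u_{loc}(p_2)$. Because this argument uses only $C^1$-open data (cone invariance and rate bounds), it is automatically robust. Your overlap inequalities $g_1(3)>g_2(0)$ are morally the same phenomenon --- they are what makes $K_p$ a blender-horseshoe --- but to promote them to a statement for all $f\in\cU_1$ you would still need to run an argument of this type (or cite the $C^1$-robustness of blender-horseshoes from \cite{persistence} or \cite[Chapter~6]{BDV}). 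As written, the persistence step for (R3)--(R4) is not justified.
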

\begin{proof}
Properties (R1) and (R2) follow directly from the continuous variation of the stable and unstable manifolds. Similarly $p_2\in R_2$ and $q_3\in R_1$ by continuity.

We prove (R3) with classical blender arguments (see \cite{persistence} and \cite[chapter 6]{BDV} for more details). The set $K_p$ is a called \emph{blender-horseshoes} in \cite[section 3.2]{BDtang}.

A \emph{cs-strip} $\cS$ is the image by a diffeomorphism $\phi: [-1,1]^2 \to Q_p=[0,5]\times [0,2+\frac 1 3]\times [-1,3+\frac 1 2]$ such that:
\begin{itemize}
\item[--] The surface $\cS$ is tangent to the center-stable cone field and meets $C\times [\frac 1 2, 2+\frac 1 2]$.
\item[--] The curves $\phi(t,[-1,1])$, $t\in [-1,1]$, are tangent to the strong stable cone field and
crosses $Q_p$, i.e. $\phi(t,\{-1,1\}) \subset \{0, 5\}\times [0,2+\frac 1 3] \times [-1,3+\frac 1 2]$.
\item[--] $\cS$ does not intersect $W^{u}_{loc}(p)\cup W^u_{loc}(p_2)$.
\end{itemize}
The \emph{width} of $\cS$ is the minimal length of the curves contained in $\cS$,
tangent to the center cone, and that joins $\phi(-1,[-1,1])$ and $\phi(1,[-1,1])$.

Condition (P2) is important to get the following (see \cite[lemma 6.6]{BDV} for more details):

\begin{af} There exists $\lambda>1$ such that if $\cS$ is a $cs$-strip of width $\eps$, then, either $f^{-1}(\cS)$ intersects $W^{u}_{loc}(p)\cup W^u_{loc}(p_2)$
or it contains at least one $cs$-strip with width $\lambda \eps$.
\end{af}
\begin{proof}
Using (P2), the set $f^{-1}(\cS) \cap C\times [-1,6]$ is the union of two bands crossing $C\times [-1,6]$:
the first has its two first coordinates near $H^{-1}(I_1)$, the second near $H^{-1}(I_2)$.
Their width is larger than $\lambda \eps$ where $\lambda>1$ is a lower bound of the expansion of $Df^{-1}$
in the central direction inside $Q$. We assume by contradiction that none of them intersects
$W^{u}_{loc}(p)\cup W^{u}_{loc}(p_2)$, nor $C\times [\frac 1 2, 2+\frac 1 2]$.

Since $\cS$ intersects $C\times[\frac 1 2, 2+\frac 1 2]$, from conditions (P2) and (P3)
the first band intersects $C\times[\frac 1 2, 4]$. By our assumption
it is thus contained in $C\times(2+\frac 1 2, 4]$. Using (P2) and (P3) again,
this shows that $\cS$ is contained in $C\times (2, 4]$.
The same argument with the second band shows that $\cS$ is contained
in $C\times [-1, 2)$, a contradiction.
\end{proof}

Repeating this procedure, we get an intersection point between $W^{u}_{loc}(p)\cup W^u_{loc}(p_2)$
and a backward iterate of the $cs-$strip. It gives in turn a transverse intersection point $z$
between the initial $cs-$strip and $W^{u}(p)\cup W^u(p_2)$. By construction, all the past iterates of $z$ belong to $Q_p$. Hence $z$ has a well defined local strong unstable manifold. In particular, the intersection $y$ between $W^{uu}_{loc}(z)$ and $W^s_{loc}(p)$ (which exists by (R1)) remains in $Q_p$
both for future and past iterates, thus, it belongs to $K_p$.

For any point  $x\in \cC_f \cap R_3$, one builds a $cs$-strip by thickening in the central direction the local strong stable manifold. We have proved that this $cs-$strip intersects $W^{uu}_{loc}(y)$ for some $y\in K_p$.
One con consider a sequence of thiner strips. Since $K_p$ is closed and the local strong unstable manifolds vary continuously, we get at the limit an intersection between $W^{ss}_{loc}(x)$ and
$W^{uu}_{loc}(y')$ for some $y'\in K_p$ as desired.

This gives (R3). Property (R4) can be obtained similarly.
\end{proof}
\bigskip

We have controled the local strong unstable manifold of points in $R_1\cup R_4$
and the local strong stable manifold of points in $R_2\cup R_3$.
Since neither $R_1 \cup R_4$ nor $R_2 \cup R_3$ cover completely $C\times [-1,6]$
we shall also make use of the following result:

\begin{lema}\label{lemaUniquePoints} For every diffeomorphism
in a small $C^1$-neighborhood $\cU_2 \en \cU_0$ of $f_0$,
the only point whose complete orbit is contained in $C\times [-1,\frac 1 2]$ is $p$;
symmetrically, the only point whose complete orbit is contained
in $C\times [4+\frac 1 2, 6]$ is $q$.
\end{lema}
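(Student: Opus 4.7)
I prove the first assertion; the second follows by applying the same argument to $f^{-1}$, with $q$ playing the role of $p$ and the central orientation reversed. Since each constant introduced below depends continuously on $f$ in the $C^1$-topology, it suffices to treat the unperturbed case $f = f_0$ uniformly, which yields the neighborhood $\cU_2 \subset \cU_0$.

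Suppose $z = (x, t) \in C \times [-1, \tfrac{1}{2}]$ has complete orbit $\{z_n = (x_n, t_n)\}_{n \in \mathbb{Z}}$ contained in $C \times [-1, \tfrac{1}{2}]$. Since $x_n \in C$ for every $n$, the point $x$ belongs to the horseshoe $K$ of $H$ and carries a bi-infinite itinerary $(i_n) \in \{1,2,3,4\}^{\mathbb{Z}}$ with $x_n \in H^{-1}(I_{i_n})$. The plan is to show that $i_n = 1$ for every $n$ (forcing $x = p_1^H$, the fixed point of $H$ in $I_1$), and then that $t_n = 0$ for every $n$, whence $z = p$. Properties (P2)--(P3) together with the compactness of the $H^{-1}(I_i)$ yield two uniform estimates on $[-1, \tfrac{1}{2}]$: for $i_n = 1$, $g_{x_n}$ fixes $0$ and is a strict contraction, $|g_{x_n}(t)| \leq \lambda |t|$ for some $\lambda \in (0, 1)$; for $i_n \in \{2,3,4\}$, $g_{x_n}$ has no fixed point in $[-1, \tfrac{1}{2}]$ (its attracting fixed point being $1$ or $3$, both above $\tfrac{1}{2}$), hence $g_{x_n}(t) - t \geq \varepsilon > 0$ and $g_{x_n}(0) \geq \eta > 0$ uniformly.

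\emph{Step 1: $t_n \geq 0$ for every $n$.} If $t_{n_0} < 0$, the backward iterates strictly decrease: an $i=1$-step expands $|t|$ by the factor $1/\lambda > 1$ (the inverse $g_{x_{n-1}}^{-1}$ being the expansion of the sink of $g_{x_{n-1}}$ at $0$), so $t_{n-1} \leq t_n/\lambda < t_n$; an $i \neq 1$-step yields $t_{n-1} \leq t_n - \varepsilon$. In either case, the sequence $(t_{n_0 - k})_{k \geq 0}$ escapes below $-1$ in a bounded number of steps, contradicting the invariance.

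\emph{Step 2: $t_n = 0$ for every $n$.} Assume instead that $t_{n_0} > 0$; then monotonicity of $g_x$ together with $g_x(0) \geq 0$ and Step~1 forces $t_n > 0$ for every $n$. Reading the relation $g_{x_{n-1}}(t_{n-1}) = t_n$ backward, an $i = 1$-step grows $t$ by factor $\geq 1/\lambda$, while an $i \neq 1$-step drops $t$ by at least $\varepsilon$; moreover $t_{n-1} \geq 0$ combined with $g_{x_{n-1}}(0) \geq \eta$ forces $t_n \geq \eta$ whenever $i_{n-1} \neq 1$. Consequently any maximal run of consecutive backward $i = 1$-steps has length at most $L := \lceil \log(1/(2\eta))/\log(1/\lambda) \rceil$, for otherwise $t$ would exceed $\tfrac{1}{2}$. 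Denoting by $m_1 > m_2 > \cdots$ the (infinite) sequence of backward active indices, this gives the recursion $t_{m_{k+1}} \geq (t_{m_k} - \varepsilon)\lambda^{-a_k}$ with $a_k \leq L$, which together with the upper constraint $t_{m_{k+1}} \leq \tfrac{1}{2}$ yields $t_{m_k} \leq \varepsilon + \lambda^{a_k}/2$; iterating this linear-fractional bookkeeping forces $t_{m_k} < 0$ after finitely many steps, contradicting Step~1. Hence $t_n = 0$ for every $n$, and then $g_{x_{n-1}}(0) = 0$ can only hold for $i_{n-1} = 1$, so the itinerary is identically $1$, giving $x = p_1^H$ and $z = p$.

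The main obstacle is the quantitative closing of Step~2: one must pit the bounded multiplicative expansion $\lambda^{-L}$ between consecutive backward active steps against the uniform additive drop $\varepsilon$ and the positive lower bound $\eta$ on $g_x(0)$, to force the subsequence $(t_{m_k})$ to exit $[0, \tfrac{1}{2}]$ downward. The separations chosen in (P3) between the fixed-point values $0, 1, 3$ and between them and $\tfrac{1}{2}$ are tailored so that the constants $\lambda, \varepsilon, \eta$ yield the required contracting-with-drift recursion.
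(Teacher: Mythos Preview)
Your Step~2 does not close. From $t_{m_{k+1}}\geq (t_{m_k}-\varepsilon)\lambda^{-a_k}$ and $t_{m_{k+1}}\leq\tfrac12$ you extract only $t_{m_k}\leq\varepsilon+\tfrac12\lambda^{a_k}$, a \emph{static} upper bound on each term, not a recursion that drives the sequence below~$0$; and your bound $a_k\leq L$ is unjustified, since the backward run of $i=1$ steps begins at $t_{m_k-1}\in[0,\,t_{m_k}-\varepsilon]$, which may be arbitrarily close to~$0$, not at $t_{m_k}\geq\eta$. Contrary to the last sentence of your plan, the fixed-point positions in~(P3) do \emph{not} tailor $\lambda,\varepsilon,\eta$ so that the recursion contracts. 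Take for instance $g_1(t)=0.9\,t$ and $g_3(t)=0.9\,t+0.1$ on a neighbourhood of $[0,\tfrac12]$ (derivative $0.9\in(\tfrac45,1)$; both extend smoothly so as to satisfy (P2)--(P3)). Then $g_3\circ g_1^2$ has the attracting fixed point $t^*=0.1/(1-0.9^{3})\approx 0.369$, and the period-$3$ orbit of $F$ over the horseshoe point with itinerary $(1,1,3)^\infty$ lies entirely in $C\times(0.29,0.37)\subset C\times(0,\tfrac12)$. Along this orbit $t_{m_k}\equiv t^*$ is constant and never drops below~$0$. So under (P1)--(P4) alone the assertion of Step~2 can genuinely fail; no bookkeeping will rescue it.

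The paper's argument is different and much shorter: it does not track the fibre coordinate quantitatively but asserts, by analogy with the verification of~(H3), that the set of points whose entire backward orbit stays in $C\times[-1,\tfrac12]$ coincides with $W^{uu}_{loc}(p)$; since forward iterates of $W^{uu}_{loc}(p)\setminus\{p\}$ escape, the lemma follows. The example above shows that this claim, too, needs more of the specific $f_0$ than (P1)--(P4) alone record --- for instance $g_x(0)>\tfrac12$ for every $x\in H^{-1}(I_2\cup I_3\cup I_4)$, which forces any forward $i\neq1$ step starting from $t\in[0,\tfrac12]$ to exit immediately, so that a bi-infinite orbit in $C\times[0,\tfrac12]$ must have constant itinerary~$1$ and hence coincide with~$\{p\}$. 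Your reduction to the skew product and your Step~1 are correct; the missing ingredient is an extra quantitative hypothesis of this kind on the fibre maps, after which Step~2 becomes a one-line observation rather than a recursion.
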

\begin{proof}
We argue as for property (H3) in section~\ref{ss.robust}:
the set of points whose past iterates stay in $C\times [-1,\frac 1 2]$ is the local strong unstable manifold of $p$. Since $p$ is the only point in its local unstable manifold
whose future iterates stay in $C\times [-1,\frac 1 2 ]$ is $p$ we conclude.
\end{proof}

\subsection{Properties (I) and (II) of the theorem}
We now check that (I) and (II) hold for the region
$U=\interior(C\times [-1,6])$ and the neighborhood
$\cU:=\cU_1\cap \cU_2$.

\begin{prop}\label{p.approx-strong}
For any $f\in \cU$, $x\in \cC_f$, there are
arbitrarily large $n_q,n_p\geq 0$
such that $W^{uu}_{loc}(f^{n_q}(x))\cap W^{ss}(y_q)\neq \emptyset$ and
$W^{ss}_{loc}(f^{-n_p}(x))\cap W^{uu}(y_p)\neq \emptyset$
for some $y_q\in K_q$, $y_p\in K_p$.
\end{prop}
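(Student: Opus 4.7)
It suffices to prove the statement about $n_q$; the $n_p$-statement follows by applying the same argument to $f^{-1}$, which swaps $K_p \leftrightarrow K_q$ and $W^{uu} \leftrightarrow W^{ss}$, turning Lemma~\ref{lemaRegions}(R3) into the dual of (R4). Fix $x \in \cC_f$ and $N \geq 0$; I seek $n_q \geq N$ with $W^{uu}_{loc}(f^{n_q}(x))\cap W^{ss}(y_q)\neq \emptyset$ for some $y_q\in K_q$.

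Two cases are immediate. If some iterate $f^n(x)$ with $n\geq N$ lies in $R_4$, Lemma~\ref{lemaRegions}(R4) at $n_q := n$ directly produces the desired intersection, since $W^{ss}_{loc}(y_q)\subset W^{ss}(y_q)$. If some iterate $f^n(x)$ with $n\geq N$ lies in $K_q$, invariance of $K_q$ allows us to take any $n_q\geq n$ and set $y_q := f^{n_q}(x)$: the trivial inclusion $f^{n_q}(x)\in W^{uu}_{loc}(f^{n_q}(x))\cap W^{ss}(y_q)$ closes this case. It thus remains to exclude the possibility that the forward orbit of $x$ avoids $R_4 \cup K_q$ for all $n\geq N$.

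Suppose for contradiction that this avoidance holds. The central coordinates $t_n$ then lie in $[-1, 2+\tfrac{1}{2}) \cup (4+\tfrac{1}{2}, 6]$ for $n\geq N$. Using the central dynamics of (P2)--(P3) above each band $H^{-1}(I_i)$ (contracting to $\{0,3\}$ above $I_1\cup I_2$ and to $\{1\}$ above $I_3\cup I_4$), together with Lemma~\ref{lemaUniquePoints} applied to the invariant $\omega$-limit $\omega(x)$, I expect to show that $\omega(x)$ must reduce to one of the hyperbolic fixed points $\{p,q,p_3,p_4\}$, each of which lies outside $R_4$ and outside $K_q$. Call it $p^*$. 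By $C^1$-continuity of the strong unstable foliation along $\cC_f$, $W^{uu}_{loc}(f^{n_q}(x))$ converges to $W^{uu}_{loc}(p^*)$ as $n_q\to\infty$, so the desired intersection for large $n_q$ will follow from $W^{uu}_{loc}(p^*) \cap W^{ss}(K_q) \neq \emptyset$: this is trivial for $p^*=q$ (take $y_q = q$), and for $p^*\in\{p,p_3,p_4\}$ it holds by the blender-horseshoe structure of $K_q$ already used in the proof of (R4) — each such $p^*$ has a strong unstable arc whose forward iterates robustly meet a local $cs$-leaf of $K_q$, and this intersection persists for the $C^1$-nearby curves $W^{uu}_{loc}(f^{n_q}(x))$.

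\textbf{Main obstacle.} The delicate step is forcing $\omega(x)\subset\{p,q,p_3,p_4\}$ under the avoidance hypothesis. One has to combine the fiber-wise contractions and expansions of (P3) across all possible itineraries in $\{1,2,3,4\}^\ZZ$, use invariance of $\omega(x)$ together with Lemma~\ref{lemaUniquePoints}, and rule out non-trivial recurrence in the transition strips $C\times (\tfrac{1}{2}, 2+\tfrac{1}{2})$ and $C \times (4+\tfrac{1}{2}, 6]$ other than the listed fixed points.
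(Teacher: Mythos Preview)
Your route diverges from the paper's and contains a genuine gap at both of the steps you flag.

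\textbf{On step 4.} The claim that, under avoidance of $R_4$, the $\omega$-limit must reduce to a single fixed point is not merely ``delicate'': it is not implied by (P1)--(P4) and is very likely false. The sub-horseshoe $K_p\subset\cC_f$ carries periodic orbits of every period with itinerary in $\{1,2\}^{\ZZ}$; along such an orbit the central coordinate evolves under compositions of $g_1$ and $g_2$, which by (P3) are contractions on $[-1,3+\tfrac12]$ toward $0$ and $3$ respectively. Nothing in the hypotheses forces every such periodic orbit other than $\{p\}$ to enter $C\times[2+\tfrac12,4+\tfrac12]$; any one that does not is already a counterexample to your reduction (take $x$ on that orbit). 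Your list of candidate limit points is also off: $q\in K_q$, so the case $p^\ast=q$ was already handled earlier, while $q_3$ (central coordinate $2$) lies outside $R_4$ and is absent from your list.

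\textbf{On step 5.} Even granting step 4, the assertion that $W^{uu}_{loc}(p^\ast)\cap W^{ss}(K_q)\neq\emptyset$ for $p^\ast\in\{p,p_3,p_4\}$ is precisely the proposition you are proving, specialised to $x=p^\ast$. The blender property you invoke is (R4), and (R4) is only stated and proved for base points in $R_4$; since $p,p_3,p_4$ all have central coordinate $\leq 1$, it does not apply to them. To route $W^{uu}_{loc}(p^\ast)$ into $R_4$ you would have to iterate forward, and that brings you back to the general problem.

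\textbf{What the paper does instead.} The missing ingredient is item (R1) of Lemma~\ref{lemaRegions} combined with the $\lambda$-lemma. The dichotomy is simply whether the forward orbit of $x$ eventually stays in $C\times(4+\tfrac12,6]$. If it does, Lemma~\ref{lemaUniquePoints} forces $x\in W^{ss}(q)$, so $f^{n_q}(x)\in W^{uu}_{loc}(f^{n_q}(x))\cap W^{ss}(q)$ for every $n_q$. Otherwise there are arbitrarily large $n$ with $f^n(x)\in R_1$, and (R1) yields a transverse intersection $W^{uu}_{loc}(f^n(x))\cap W^s(p)$. Since $p$ is homoclinically related to $p_2$ and $p_2\in R_4$ (its central coordinate is $3$), the $\lambda$-lemma produces $k\geq 0$ such that $f^k\big(W^{uu}_{loc}(f^n(x))\big)$ contains a local strong unstable disk through some point of $W^s(p_2)\cap R_4$; now (R4) gives an intersection with $W^{ss}_{loc}(y_q')$ for some $y_q'\in K_q$, and pulling back by $f^{-k}$ finishes with $n_q=n$. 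No analysis of $\omega(x)$ is needed.
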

\begin{proof}
If $\{f^n(x), n\geq n_0\}\subset C\times [4+\frac 1 2, 6]$, for some $n_0\geq 0$,
then $x\in W^{ss}(q)$ by lemma~\ref{lemaUniquePoints}.

In the remaining case, there exist some arbitrarily large forward iterates
$f^n(x)$ in $R_1$, so that $W^{uu}_{loc}(f^n(x))$ meets $W^s(p)$
by lemma~\ref{lemaRegions}.
Since $p$ is homoclinically related with $p_2$, by the $\lambda$-lemma
there exists $k\geq 0$ such that $f^k(W^{uu}_{loc}(f^n(x)))$
contains $W^{uu}_{loc}(x')$ for some $x'\in W^{s}(p_2)\cap R_4$ because $p_2\in R_4$.
By lemma~\ref{lemaRegions}, $f^k(W^{uu}_{loc}(f^n(x)))$ intersects $W^{ss}_{loc}(y'_q)$
for some $y'_q\in K_q$ showing that
$W^{uu}_{loc}(f^n(x))\cap W^{ss}(y_q)\neq \emptyset$ with $y_q=f^{-k}(y'_q)$
in $K_q$.

We have obtained the first property in all the cases. The second property is similar.
\end{proof}
\medskip

The following corollary (together with the isolation property of section~\ref{ss.robust})
implies that for every $f\in \cU$, the properties (I) and (H1) are verified.

\begin{cor}\label{PropH1} For every $f\in \cU$ the set $\cC_f$ is
contained in a chain-transitive class.
\end{cor}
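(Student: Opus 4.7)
\medskip
\noindent\textbf{Proof plan.} The plan is to show that any two points $x,y\in\cC_f$ can be connected in both directions by $\varepsilon$-pseudo-orbits, using the two hyperbolic horseshoes $K_p,K_q$ as a ``bridge''. Once this chain-transitivity is established, the isolation property of $\cC_f$ proved in section~\ref{ss.robust} will force $\cC_f$ to coincide with a single chain-recurrence class.

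First I will use proposition~\ref{p.approx-strong} to connect an arbitrary $x\in\cC_f$ to $K_q$ by a pseudo-orbit. Given $\varepsilon>0$, choose $n_q$ large and $y_q\in K_q$ with a point $w\in W^{uu}_{loc}(f^{n_q}(x))\cap W^{ss}(y_q)$. Because $w$ lies on the local strong unstable manifold of $f^{n_q}(x)$, the jump from $f^{n_q}(x)$ to $w$ is controlled, and the forward iterates of $w$ shadow the orbit of $y_q$; concatenating the orbit segment of $x$, the small jump, and the forward orbit of $w$ yields an $\varepsilon$-pseudo-orbit from $x$ reaching arbitrarily close to $K_q$. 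Symmetrically, from the second half of proposition~\ref{p.approx-strong}, reversing the roles of strong stable and strong unstable manifolds gives an $\varepsilon$-pseudo-orbit from a point of $K_p$ to $y$.

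Next I will chain-connect $K_q$ to $K_p$ using the heteroclinic data supplied by lemma~\ref{lemaRegions}. Since $q_3\in R_1$, we have $W^{uu}_{loc}(q_3)\cap W^s(p)\neq\emptyset$, producing a heteroclinic orbit from $O(q_3)\subset K_q$ to $p\in K_p$; (the second intersection $W^{ss}_{loc}(p_2)\cap W^u(q)\neq\emptyset$ is available too, but a single heteroclinic suffices.) Because $K_p$ and $K_q$ are transitive hyperbolic sets containing $p$ and $q_3$ respectively, one can pseudo-orbit from any point of $K_q$ to $q_3$, jump along the heteroclinic to arrive close to $p$, and then pseudo-orbit inside $K_p$ to any chosen point.

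Finally I will assemble: for any $x,y\in\cC_f$, the concatenation $x\rightsquigarrow K_q\rightsquigarrow K_p\rightsquigarrow y$ gives an $\varepsilon$-pseudo-orbit from $x$ to $y$, and the same construction with the roles of $x$ and $y$ swapped produces a pseudo-orbit from $y$ to $x$. Hence $\cC_f$ is chain-transitive and therefore contained in a single chain-recurrence class. The main conceptual step is step one (turning the strong-manifold intersection provided by proposition~\ref{p.approx-strong} into an actual pseudo-orbit), but this is routine once one observes that local strong manifold intersections give small jumps and asymptotic shadowing; the bridge between $K_q$ and $K_p$ is the substantive dynamical input, and it is already encoded in lemma~\ref{lemaRegions}.
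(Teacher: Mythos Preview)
Your proof is correct and follows essentially the same route as the paper: use proposition~\ref{p.approx-strong} to pseudo-connect any point of $\cC_f$ forward to $K_q$ and backward from $K_p$, then bridge $K_q\to K_p$ via the heteroclinic $W^{uu}_{loc}(q_3)\cap W^s(p)\neq\emptyset$ from lemma~\ref{lemaRegions}; the paper packages this as ``every $x$ lies on a closed $\varepsilon$-pseudo-orbit through $p$'', but the ingredients are identical. One small imprecision: the jump from $f^{n_q}(x)$ to $w\in W^{uu}_{loc}(f^{n_q}(x))$ is \emph{not} small (the local strong unstable leaf crosses the whole cube), so the jump must be made at an earlier iterate using backward contraction along $W^{uu}$---which is exactly why one needs $n_q$ arbitrarily large, and is what your phrase ``asymptotic shadowing'' correctly captures.
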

\begin{proof}
For any $\varepsilon>0$ and $x\in \cC_f$, there exists a
$\varepsilon$-pseudo-orbit $p=x_0,x_1,\dots,x_n=p$, $n\geq 1$, which contains $x$.
Indeed by proposition~\ref{p.approx-strong},
and using that $K_p,K_q$ are transitive and contain
respectively $p$ and $q_3$, there exists a $\varepsilon$-pseudo-orbit
from $p$ to $q_3$ which contains $x$.
By lemma~\ref{lemaRegions}, the unstable manifold of $q_3$ intersects the stable manifold of $p$, hence there
exists a $\varepsilon$-pseudo-orbit from $q_3$ to $p$.
We take the concatenation of these pseudo-orbits.
\end{proof}
\bigskip

Now, we show that (H2) holds for a $C^r$ dense set $\cD$ of $\cU$.
Since (H1) and (H3) are satisfied, proposition~\ref{propMecanisme}
implies that the property (II) of the theorem holds with the set $\cD\subset \cU$.
In fact, as we noticed in section~\ref{ss.robust} it is enough to get the following.

\begin{cor}\label{corH2densamente} For every $r\geq 1$,
the set $$\cD=\{f\in \cU, \; W^{u}(p)\cap W^{s}(q)\neq \emptyset\}$$
is dense in $\cU\cap \Diff^r(M)$. It is a countable union of one-codimensional submanifolds.
\end{cor}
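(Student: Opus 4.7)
The plan is to combine the near-intersection statement of Proposition~\ref{p.approx-strong} with a local $C^r$-small bump perturbation of $f$. Applying Proposition~\ref{p.approx-strong} at $x=p$ with $n_q$ a multiple of the period of $p$ yields a point $z\in W^{uu}_{loc}(p)\cap W^{ss}(y_q)\subset W^u(p)\cap W^s(y_q)$ for some $y_q\in K_q$. Since $K_q$ is a transitive hyperbolic set with dense periodic points all mutually homoclinically related, I replace $y_q$ by a nearby periodic point homoclinically related to $q$; the $\lambda$-lemma then shows $W^s(q)=W^{ss}(q)$ accumulates on $W^{ss}(y_q)$ in the $C^r$-topology near the orbit of $y_q$, and backward iteration (which contracts transverse directions along the orbit of $z$) propagates this accumulation to a $C^r$-neighborhood of $z$. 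Hence there is a sequence $w_k\in W^s(q)$ with $w_k\to z$ and $T_{w_k}W^s(q)\to T_zW^{ss}(y_q)$.

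To realize the heteroclinic intersection, choose $z$ and a small neighborhood $V$ of $z$ disjoint from the orbits of $p,q$, from $K_p$ and $K_q$, from the cuspidal half-manifolds of section~\ref{ss.robust}, and satisfying $f^j(V)\cap V=\emptyset$ for $1\leq|j|\leq N$ with $N$ large. In a chart on $V$ let $\eta$ be a smooth bump function equal to $1$ near $z$, and set $\phi_k(x)=x+\tfrac{1}{2}(w_k-z)\eta(x)$; this is a $C^r$-diffeomorphism supported in $V$ of $C^r$-size $O(\|w_k-z\|)$ with $\phi_k^{2}(z)=w_k$. Let $g:=\phi_k\circ f$. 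The disjointness of $V$ from its iterates gives
\[
W^u(p_g)\cap V=\phi_k(W^u(p_f))\cap V,\qquad W^s(q_g)\cap V=\phi_k^{-1}(W^s(q_f))\cap V,
\]
and the point $y:=\phi_k(z)$ lies in both, since $\phi_k^{-1}(y)=z\in W^u(p_f)$ and $\phi_k(y)=\phi_k^{2}(z)=w_k\in W^s(q_f)$. The properties defining $\cU$---partial hyperbolicity with uniform rates, the blender constants of $K_p$ and $K_q$, the filtrating neighborhoods of the cuspidal half-manifolds, and robust isolation of $\cC_f$---are all open $C^1$-conditions, hence $g\in\cU\cap\cD$ for $k$ large, proving density of $\cD$ in $\cU\cap\Diff^r(M)$.

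For the submanifold structure, fix $C^r$-parameterizations $\gamma^u_{f,n}:[0,1]\to M$ and $\gamma^s_{f,m}:[0,1]\to M$ of the $n$-th and $m$-th fundamental domains of $W^u(p_f)$ and $W^s(q_f)$, depending $C^r$ on $f$, and let
\[
\cD_{n,m}:=\bigl\{f\in\cU:\exists\,(s,t)\in[0,1]^2,\ \gamma^u_{f,n}(s)=\gamma^s_{f,m}(t)\bigr\},
\]
so $\cD=\bigcup_{n,m\geq 0}\cD_{n,m}$. For $f\in\cD_{n,m}$ with intersection $p_0=\gamma^u_{f,n}(s_0)=\gamma^s_{f,m}(t_0)$ and any direction $v\in T_{p_0}M$, a bump perturbation $\phi=\mathrm{id}+\epsilon v\cdot\eta$ localized near $p_0$ (away from other iterates of $p_0$) moves $\gamma^u_{g,n}(s_0)-\gamma^s_{g,m}(t_0)$ in direction $v$ at first order in $\epsilon$. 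Thus the evaluation $\Psi(f,s,t):=\gamma^u_{f,n}(s)-\gamma^s_{f,m}(t)$ is a submersion at $(f,s_0,t_0)$; its zero locus has codimension $3$ in $\cU\times[0,1]^2$ and projects onto the codimension-$1$ submanifold $\cD_{n,m}\subset\cU$, so $\cD$ is a countable union of such. The main technical point is the coordinate identification $W^u(p_g)\cap V=\phi_k(W^u(p_f))\cap V$ and $W^s(q_g)\cap V=\phi_k^{-1}(W^s(q_f))\cap V$: this relies on the disjointness of $V$ from its $f$-iterates, ensuring that the $g$-orbit of any point in $V$ coincides with its $f$-orbit up to one application of $\phi_k$ at the visit to $V$.
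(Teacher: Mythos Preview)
Your overall strategy mirrors the paper's: use Proposition~\ref{p.approx-strong} to find a point $z\in W^u(p)\cap W^{ss}(y_q)$ with $y_q\in K_q$, use local maximality of $K_q$ to produce nearby points of $W^s(q)$, then push $z$ onto $W^s(q)$ by a $C^r$-small bump perturbation supported away from the relevant invariant sets. The submanifold argument is also close in spirit to the paper's (which is terser but rests on the same transversality idea).

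However, there is a genuine error in your perturbation step. You set $g=\phi_k\circ f$ and claim
\[
W^u(p_g)\cap V=\phi_k(W^u(p_f))\cap V,\qquad
W^s(q_g)\cap V=\phi_k^{-1}(W^s(q_f))\cap V.
\]
The first identity is correct: for $a\in V$ one has $g^{-1}(a)=f^{-1}(\phi_k^{-1}(a))$, and since $f^{-1}(V)\cap V=\emptyset$ the backward $g$-orbit of $a$ agrees with the backward $f$-orbit of $\phi_k^{-1}(a)$. But the second identity is \emph{false}. For $a\in V$ one has $g(a)=\phi_k(f(a))$; since $f(a)\in f(V)$ lies outside $V$, the map $\phi_k$ acts trivially there and $g(a)=f(a)$. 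Hence the forward $g$-orbit of $a$ coincides with its forward $f$-orbit (as long as it stays out of $V$), so $W^s(q_g)\cap V=W^s(q_f)\cap V$, not $\phi_k^{-1}(W^s(q_f))\cap V$. Consequently your ``half-step'' map with $\phi_k^2(z)=w_k$ does \emph{not} create an intersection at $y=\phi_k(z)$: you do get $y\in W^u(p_g)$, but there is no reason for $y\in W^s(q_g)=W^s(q_f)$. A single composition $\phi_k\circ f$ moves only one of the two invariant manifolds through $V$, never both.

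The repair is to drop the factor $\tfrac12$: take $\phi_k$ with $\phi_k(z)=w_k$ directly. Then $w_k\in\phi_k(W^u(p_f))\cap V=W^u(p_g)\cap V$ and $w_k\in W^s(q_f)\cap V=W^s(q_g)\cap V$, producing the intersection at $w_k$. This is precisely what the paper does. You must also strengthen the disjointness hypothesis on $V$: requiring $f^j(V)\cap V=\emptyset$ only for $1\le|j|\le N$ is not sufficient, since the identities above need the entire forward $f$-orbit of $w_k$ and the entire backward $f$-orbit of $z$ to avoid $V$. The paper arranges this by taking the perturbation neighborhood disjoint from $K_q\cup\{p\}$ and from \emph{all} iterates of the chosen point, and by selecting $w_k$ on a local strong stable leaf of a point of $K_q\cap W^s(q)$, so that its forward orbit stays in a neighborhood of $K_q$ and hence away from $V$.
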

In the $C^1$ topology, this result is direct consequence of the connecting lemma
(together with proposition~\ref{p.approx-strong}).
The additional structure of our specific example allows to make these perturbations in any $C^r$-topology.
\begin{proof} Fix any $f\in \cU$.
By proposition~\ref{p.approx-strong}, there exists $x\in K_q$ such that $W^{u}(p)$
intersects $W^{ss}(x)$ at a point $y$ (notice that $y\not \in K_q\cup \{p\}$).
Let $U$ be a neighborhood of $y$ such that:

\begin{itemize}
\item[--] $U$ is disjoint from the iterates of $y$, i.e.
$\{f^n(y) \ : \ n \in \ZZ \} \cap U = \{y\}$;
\item[--] $U$ is disjoint from $K_q\cup \{p\}$.
\end{itemize}

Given a $C^r$ neighborhood $\cV$ of the identity, there exists
a neighborhood $V\en U$ of $y$ such that,
for every $z\in V$, the set $\cV$ contains a diffeomorphism $g_z$ which coincides
with the identity in the complement of $U$ and maps $y$ at $z$.

Since $K_q$ is locally maximal, there exists $\bar x\in K_q\cap W^s(q)$ near $x$.
In particular $W^{ss}_{loc}(\overline{x})$ intersects $V$ in a point $z$
whose backward orbit is disjoint from $U$.

For the diffeomorphism $h= g_z\circ f$ (which is $C^r$-close to $f$)
the manifolds $W^{s}(q)$ and $W^{u}(p)$ intersect.
Indeed both $f$ and $h$ satisfy $f^{-1}(y)\in W^{u}(p)$
and $z\in W^{ss}_{loc}(\overline{x})$.
Since  $W^{ss}_{loc}(\overline{x})\en W^{ss}(q)$
and $h(f^{-1}(y))= z$ we get the conclusion.

For each integer $n\geq 1$, the manifolds
$f^n(W^{uu}_{loc}(p))$ and $W^{ss}_{loc}(q)$ have disjoint boundary
and intersect in at most finitely many points.
One deduces that the set $\cD_n$ of diffeomorphisms such that they intersect
is a finite union of one-codimensional submanifold of $\cU$.
The set $\cD$ is the countable union of the $\cD_n$.
\end{proof}

\subsection{Other properties}\label{sectionAutresProprietes}
We here show properties (1), (2) and (3) of the theorem.

\begin{prop}\label{propPerteneceHomoclinica} For every $f\in \cU$ and $x\in \cC_f$ we have:
\begin{itemize}
\item[--] If $x\not\in W^{s}(q)$, there exist large $n\geq 0$ such that
$W^{uu}_{loc}(f^n(x))\cap W^s(p)\neq \emptyset$.
\item[--] If $x \not\in W^{u}(p)$, there exists large $n\geq 0$ such that
$W^{ss}_{loc}(f^{-n}(x))\cap W^u(q)\neq\emptyset$.
\end{itemize}
Moreover, in the first case $x$ belongs to the homoclinic class of $p$ and in the second
it belongs to the homoclinic class of $q$.
\end{prop}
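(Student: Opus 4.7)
The first bullet follows by contradiction: if $W^{uu}_{loc}(f^n(x))\cap W^s(p)=\emptyset$ for all $n\geq n_0$, then property (R1) of Lemma~\ref{lemaRegions} forces $f^n(x)\in C\times(4+\tfrac12,6]$ for all $n\geq n_0$, and Lemma~\ref{lemaUniquePoints} applied to $q$ then identifies $f^{n_0}(x)$ as a point of $W^{ss}_{loc}(q)\subset W^s(q)$, contradicting $x\notin W^s(q)$. The second bullet is symmetric, using (R2).

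For the ``moreover'' in the first case, I plan to produce two sequences converging to $x$: one in $W^s(p)$ and one in $W^u(p)$. Setting $w_n:=f^{-n}(z_n)$ for $z_n\in W^{uu}_{loc}(f^n(x))\cap W^s(p)$ from the first bullet, strong-unstable contraction under $f^{-1}$ yields $w_n\in W^s(p)$ and $w_n\to x$. Proposition~\ref{p.approx-strong} supplies $z'_{n_p}\in W^{ss}_{loc}(f^{-n_p}(x))\cap W^{uu}(y_p)$ with $y_p\in K_p$; by strong-stable contraction under $f$, the point $\tilde w_{n_p}:=f^{n_p}(z'_{n_p})$ converges to $x$ and lies in $W^{uu}(f^{n_p}(y_p))\subset W^u(K_p)$. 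Since $K_p$ is a transitive locally maximal hyperbolic set whose periodic orbits are all homoclinically related to $p$, the inclination lemma yields $C^1$-density of $W^u(p)$ in the unstable lamination of $K_p$, so I may approximate $\tilde w_{n_p}$ by points $\hat w_{n_p}\in W^u(p)$ with $\hat w_{n_p}\to x$.

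To conclude $x\in H(p)$, I extract a transverse homoclinic intersection of $p$ arbitrarily close to $x$ from the proximity of $w_n\in W^s(p)$ and $\hat w_{n_p}\in W^u(p)$ to $x$. The dominated splitting on a neighborhood of $\cC_f$ gives uniform cones around $E^{cs}$ and $E^{uu}$, and the local plaques of $W^s(p)$ through $w_n$ and of $W^u(p)$ through $\hat w_{n_p}$ are tangent to these cones. The main technical step, and the point I expect to be most delicate, is to ensure that these plaques contain disks of size uniformly bounded below (independently of $n$ and $n_p$); this follows from graph-transform estimates together with the fact that the relevant orbit segments remain in a fixed filtrating neighborhood of $\cC_f$. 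With uniform size in hand and the transversality $E^{cs}_x\oplus E^{uu}_x=T_xM$, the two plaques, viewed as graphs over complementary subspaces with small $C^1$-norm, intersect transversally in any prescribed neighborhood of $x$ by a contraction-mapping argument, producing a transverse homoclinic point of $p$ close to $x$. The homoclinic-class statement for $q$ follows by the symmetric argument.
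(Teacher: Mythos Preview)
Your proof is correct and follows essentially the same route as the paper: the first bullet via (R1) and Lemma~\ref{lemaUniquePoints}, and the ``moreover'' via Proposition~\ref{p.approx-strong} combined with the $\lambda$-lemma on $K_p$ to approximate the unstable lamination of $K_p$ by $W^u(p)$. The paper streamlines your final transversality step by noting that your points $w_n=f^{-n}(z_n)$ actually lie on $W^{uu}_{loc}(x)$ itself (since $z_n\in W^{uu}_{loc}(f^n(x))$ and $f^{-n}$ contracts the strong unstable direction), so one already has a transverse piece of $W^s(p)$ through a point of $W^{uu}_{loc}(x)$ arbitrarily close to $x$; the $W^u(p)$-disks $C^1$-close to $W^{uu}_{loc}(\tilde w_{n_p})$ then meet it by cone transversality, and your uniform-plaque-size concern disappears.
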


\begin{proof}
By lemma~\ref{lemaUniquePoints}, any point $x\in \cC_f\setminus W^{s}(q)$ has arbitrarily large iterates
$f^n(x)$ in $R_1$, proving that $W^{uu}_{loc}(f^n(x))\cap W^s(p)\neq \emptyset$.

In particular, $W^s(p)$ intersects transversaly $W^{uu}_{loc}(x)$
at points arbitrarily close to $x$. On the other hand by proposition~\ref{p.approx-strong},
there exists a sequence $z_n$ converging to $x$ and
points $y_n\in K_p$ such that $z_n\in W^{u}(y_n)$ for each $n$, proving that
$W^{uu}_{loc}(z_n)$ intersects $W^u(p)$ transversaly at a point close to $x$ when
$n$ is large.
By the $\lambda$-lemma, $W^{uu}_{loc}(y_n)$ is the $C^1$-limit of a sequence of
discs contained in $W^u(p)$. This proves that $W^u(p)$ and $W^s(p)$
have a transverse intersection point close to $x$, hence $x$ belongs to the
homoclinic class of $p$.

The other properties are obtained analogously.
\end{proof}
\medskip

Let $H_f$ denotes the homoclinic class of $p$.
The next gives property (1) of the theorem.

\begin{cor}\label{corHomoclinicCoincide} For every $f\in \cU$,
the homoclinic class of any hyperbolic periodic point of $\cC_f$ coincides with $H_f$. Moreover, the periodic points in $\cC_f$ of the same stable index are homoclinically related.
\end{cor}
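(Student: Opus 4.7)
The plan is to deduce the corollary from two intermediate claims: (i) every hyperbolic periodic point $r\in\cC_f$ whose stable index equals that of $p$ (resp.\ $q$) is homoclinically related to $p$ (resp.\ $q$); (ii) $H(p)=H(q)$. Granted these, every periodic $r\in\cC_f$ of index $2$ satisfies $H(r)=H(p)=H_f$, every one of index $1$ satisfies $H(r)=H(q)=H_f$, and any two periodic points with the same stable index are homoclinically related to each other via the common reference $p$ or $q$.

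For (i) with $r$ of index $2$: since $r\neq q$ is periodic, $r\notin W^s(q)$, so Proposition~\ref{propPerteneceHomoclinica} yields $W^{uu}_{loc}(f^n(r))\cap W^s(p)\neq\emptyset$ for arbitrarily large $n$. Because $p$ and $r$ both have stable central direction, $W^u(p)=W^{uu}(p)$ and $W^u(r)=W^{uu}(r)$, so this intersection is precisely a transverse intersection $W^u(r)\cap W^s(p)\neq\emptyset$ (transversality from $E^{uu}\oplus E^{cs}=TM$). For the opposite intersection I will use that $r\in H(p)$: pick a sequence $y_n\in W^u(p)\cap W^s(p)$ of transverse homoclinic points with $y_n\to r$. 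The strong unstable leaves are saturated inside $W^u(p)$ -- if $z\in W^{uu}(y)$ and $y\in W^u(p)$, then $d(f^{-n}(z),f^{-n}(y))\to 0$ forces $f^{-n}(z)\to p$ -- so $W^{uu}_{loc}(y_n)\subset W^u(p)$. By continuity of the $W^{uu}$-foliation on $\cC_f$ and the transversality $E^{uu}\oplus E^{cs}=TM$, each arc $W^{uu}_{loc}(y_n)$ crosses the $2$-disk $W^s_{loc}(r)$ transversally at a point $x_n\to r$, providing $W^u(p)\cap W^s(r)\neq\emptyset$ transversally and thus the missing leg of the homoclinic relation. The index-$1$ case is symmetric: $W^{ss}$-leaves are saturated inside $W^s(q)$, and the second bullet of Proposition~\ref{propPerteneceHomoclinica} supplies the first intersection.

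For (ii) I use the auxiliary periodic points $p_2\in K_p$ and $q_3\in K_q$ from Section~\ref{ss.robust}. Since $K_p$ is a transitive hyperbolic set, $p_2$ is homoclinically related to $p$, so $H(p)=H(p_2)$. Any transverse homoclinic point $z$ of $p_2$ lies in $\cC_f$ by the isolation property, and $z\notin W^u(p)$ (otherwise $f^{-n}(z)$ would converge simultaneously to $p_2$ and $p$); the second bullet of Proposition~\ref{propPerteneceHomoclinica} then forces $z\in H(q)$, so $H(p)=H(p_2)\subset H(q)$ by closedness of $H(q)$. The symmetric argument with $q_3\in K_q$ yields the reverse inclusion. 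The main technical point is the transverse crossing in step (i): it depends on the $W^{uu}$-saturation of $W^u(p)$ and on the continuity of the $W^{uu}$-foliation at $r$ along the approximating sequence $y_n$; both hold because $H(p)\subset\cC_f$ and $\cC_f$ is partially hyperbolic, so the strong unstable foliation is defined and continuous at every $y_n$.
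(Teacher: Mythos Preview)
Your proof is correct and follows essentially the same two-step scheme as the paper: first relate every periodic point of a given index to the reference point $p$ (resp.\ $q$), then show $H(p)=H(q)$ via the auxiliary points $p_2,q_3$ and Proposition~\ref{propPerteneceHomoclinica}. The only noteworthy difference is in how you obtain the intersection $W^u(p)\cap W^s(r)$ for an index-$2$ point $r$: the paper invokes Proposition~\ref{p.approx-strong} directly to find $y\in K_p$ with $W^{ss}(r)\cap W^{uu}_{loc}(y)\neq\emptyset$ and then uses that $W^{uu}_{loc}(y)$ is accumulated by $W^u(p)$, whereas you use the ``Moreover'' clause of Proposition~\ref{propPerteneceHomoclinica} to place $r\in H(p)$, pick transverse homoclinic points $y_n\to r$ inside $W^u(p)=W^{uu}(p)$, and conclude by $C^1$-continuity of the strong unstable lamination that $W^{uu}_{loc}(y_n)$ crosses $W^s_{loc}(r)$. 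Both routes are valid; yours avoids a second explicit appeal to the blender proposition at the cost of citing the stronger conclusion of Proposition~\ref{propPerteneceHomoclinica}.
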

\begin{proof}

Let $z\in \cC_f$ be a hyperbolic periodic point whose stable index is $2$.
By proposition~\ref{p.approx-strong} $W^{ss}(z)$ intersects $W^{uu}_{loc}(y)$ for some $y\in K_p$, this implies that $W^s(z)$ intersects $W^{uu}_{loc}(y)$ and since $W^{uu}_{loc}(y)$ is accumulated by $W^{u}(p)$ we get that $W^s(z)$ intersects $W^u(p)$. Now, by proposition~\ref{propPerteneceHomoclinica}, $W^{u}(z)$ intersects $W^{s}(p)$. Moreover the partial hyperbolicity
implies that the intersections are transversal, proving that
$z$ and $p$ are homoclinically related.
One shows in the same way that any hyperbolic periodic point whose stable index
is $1$ is homoclinically related to $q$.

It remains to prove that the homoclinic classes of $p$ and $q$ coincide.
The homoclinic class of $q$ contains a dense set of points $x$ that are homoclinic
to $q_3$. In particular, $x$ does not belong to $W^{u}(q)$, hence belongs to the homoclinic
class of $p$ by proposition~\ref{propPerteneceHomoclinica}.
This gives one inclusion. The other one is similar.
\end{proof}
\medskip

Properties (2) and (3) of the theorem follow from corollary~\ref{corH2densamente}
and the following.

\begin{cor}\label{PropUnicasIntersecciones} For every $f\in \cU$
we have $\cC_f \backslash H_f= W^{s}(q)\cap W^{u}(p)$.
\end{cor}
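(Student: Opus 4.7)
The plan is to prove the two set-theoretic inclusions separately.

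For $\cC_f\setminus H_f\en W^s(q)\cap W^u(p)$: I would argue by contraposition of proposition~\ref{propPerteneceHomoclinica}. Corollary~\ref{corHomoclinicCoincide} tells us that the homoclinic classes of $p$ and of $q$ both coincide with $H_f$. So if $x\in\cC_f\setminus H_f$, then $x$ lies in neither homoclinic class, and proposition~\ref{propPerteneceHomoclinica} gives $x\in W^s(q)$ from its first clause and $x\in W^u(p)$ from its second clause.

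For the reverse inclusion $W^s(q)\cap W^u(p)\en\cC_f\setminus H_f$, pick $x\in W^u(p)\cap W^s(q)$. The first subgoal is to show $x\in\cC_f$, which is handled by the two nested isolating blocks of section~\ref{ss.robust}. The $\alpha$-limit $O_p$ and $\omega$-limit $O_q$ of the orbit of $x$ are both contained in $\cC_f\en V_0=\interior(D)\times(-1,8)$. Hence some sufficiently backward iterate $f^{-n}(x)$ lies in $V_0$, and the forward invariance $f(\cla{V_0})\en V_0$ propagates this to the entire orbit. Similarly, $V_1=V_0\setminus(C\times[-1,6])$ is open, forward invariant, and disjoint from $O_q$; if the orbit ever met $V_1$ it would stay in $V_1$ forever, contradicting $\omega(x)=O_q$. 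Thus the orbit is contained in $V_0\setminus V_1\en C\times[-1,6]$, and by definition of $\cC_f$ as the maximal invariant set of $C\times[-1,6]$ one obtains $x\in\cC_f$.

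For the second subgoal, $x\notin H_f$, I would invoke the isolated-point mechanism. Section~\ref{ss.robust} already checked the hypotheses (H1)-(H3) for the triple $(p,q,x)$ inside the chain-recurrence class $\cC_f$, so proposition~\ref{propMecanisme} gives that $x$ is isolated in $\cC_f$. On the other hand, $H_f\en\cC_f$ is a homoclinic class containing the horseshoe $K_p$, hence a perfect set with no isolated points. The point $x$ therefore cannot belong to $H_f$, finishing the proof.

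The only mildly delicate step is keeping the heteroclinic orbit of $x$ inside $C\times[-1,6]$; apart from that, the argument is a routine combination of proposition~\ref{propPerteneceHomoclinica}, proposition~\ref{propMecanisme}, and the already-established fact that $H_f$ contains $K_p$.
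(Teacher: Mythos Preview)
Your proof is correct and follows essentially the same route as the paper: both inclusions are handled via corollary~\ref{corHomoclinicCoincide} and proposition~\ref{propPerteneceHomoclinica} in one direction, and via proposition~\ref{propMecanisme} (isolated point) together with the perfectness of a non-trivial homoclinic class in the other. Your detailed verification that $W^u(p)\cap W^s(q)\subset \cC_f$ via the isolating blocks $V_0,V_1$ is fine but redundant, since this was already recorded in section~\ref{ss.robust} under ``Properties (H2) and (H3)''.
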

\begin{proof}
By corollary~\ref{corHomoclinicCoincide}, a point $x\in \cC_f \backslash H_f$
does not belong to the homoclinic class of $q$ (nor to the homoclinic class of $p$
by definition of $H_f$).
Proposition~\ref{propPerteneceHomoclinica} gives
$\cC_f \backslash H_f\subset W^{s}(q)\cap W^{u}(p)$.
Proposition~\ref{propMecanisme} proves that the points of $W^{s}(q)\cap W^{u}(p)$
are isolated in $\cC_f$. Since any point in a non-trivial homoclinic class
is limit of a sequence of distinct periodic points of the class
we conclude that $W^{s}(q)\cap W^{u}(p)$ and $H_f$ are disjoint.
\end{proof}

The proof of the theorem is now complete.

\end{document}